\theoremstyle{theorem}
\newtheorem{theorem}{Theorem}[section]
\newtheorem{corollary}[theorem]{Corollary}
\newtheorem{proposition}[theorem]{Proposition}
\newtheorem{lemma}[theorem]{Lemma}
\theoremstyle{definition}
\newtheorem{definition}[theorem]{Definition}
\newtheorem{remark}[theorem]{Remark}
\numberwithin{theorem}{section}
\numberwithin{equation}{section}
\newcommand{\C}{\mathbb{C}}
\newcommand{\R}{\mathbb{R}}
\newcommand{\T}{\mathbb{T}}
\newcommand{\B}{\mathbb{B}}
\newcommand{\Z}{\mathbb{Z}}
\newcommand{\HH}{\mathbb{H}}
\DeclareMathOperator{\im}{Im}
\DeclareMathOperator{\re}{Re}
\DeclareMathOperator{\arccot}{arccot}
\newcommand{\OO}{\mathrm{O}}
\newcommand {\Aa}    {\mathcal{A}}
\newcommand {\Fa}    {\mathcal{F}}
\newcommand {\Ma}    {\mathcal{M}}
\newcommand {\Ta}    {\mathcal{T}}
\title[Toeplitz and group-moment coordinates]{Toeplitz operators and group-moment coordinates for quasi-elliptic and quasi-hyperbolic symbols}
\author{Ra\'ul Quiroga-Barranco}
\address{Centro de Investigaci\'on en Matem\'aticas, Guanajuato, M\'exico}
\email{quiroga@cimat.mx}
\author{Armando S\'anchez-Nungaray}
\address{Facultad de Matem\'aticas, Universidad Veracruzana, Veracruz, M\'exico}
\email{sancheznungaray@gmail.com}
\keywords{Toeplitz operator, Bergman space, moment map, quasi-hyperbolic action}
\subjclass{Primary 47B35, 32A36; Secondary 53D20}
\begin{document}

\maketitle

\begin{abstract}
	For $\mathbb{B}^n$ the $n$-dimensional unit ball and $D_n$ its Siegel unbounded realization, we consider Toeplitz operators acting on weighted Bergman spaces with symbols invariant under the actions of the maximal Abelian subgroups of biholomorphisms $\mathbb{T}^n$ (quasi-elliptic) and $\mathbb{T}^n \times \mathbb{R}_+$ (quasi-hyperbolic). Using geometric symplectic tools (Hamiltonian actions and moment maps) we obtain simple diagonalizing spectral integral formulas for such kinds of operators. Some consequences show how powerful the use of our differential geometric methods are.
\end{abstract}


\section{Introduction}
The study of Bergman spaces on the unit ball $\B^n$ and Toeplitz operators acting on them is by now very well known to be tightly connected to ($n$-dimensional) M\"obius transformations. Of course, this includes the unbounded realization given by the $n$-dimensional Siegel domain $D_n$. 

Symplectic geometry has also shown its importance in this branch of operator theory. In particular, we have used Hamiltonian actions and moment maps in \cite{QSJFA} to study Toeplitz operators with symbols that are invariant under maximal Abelian subgroups of M\"obius transformations. The name of the latter family of subgroups is collectively abbreviated as MASG. In such previous work, we used symplectic geometric tools to obtain diagonalizing spectral integral formulas for the quasi-elliptic, quasi-parabolic, nilpotent and quasi-nilpotent cases. We refer to \cite{QSJFA,QVUnitBall1} for the notation. These sort of formulas already appeared in \cite{QVUnitBall1}. However, it was proved in \cite{QSJFA} that the use of symplectic geometry and moment maps highly simplifies the expressions of the spectral integral formulas as well as the computations needed to obtain them. This naturally yields a better understanding of Toeplitz operators.

On the other hand, there are some computational difficulties observed in the quasi-hyperbolic case (see \cite{QVUnitBall1}), which already appeared in the $1$-dimensional setup of the upper half-plane (see \cite{GQVJFA}). This has led to very complicated diagonalizing formulas whose proofs are very lengthy and complicated as well.

Similar computational difficulties appeared also in our use of symplectic geometric tools in \cite{QSJFA}, thus obstructing us from obtaining a diagonalizing spectral integral formula for the quasi-hyperbolic case in that work. We refer to \cite{QSJFA,QVUnitBall1} and Section~\ref{sec:analysisgeometry} below for the definition of the corresponding MASG.

The present work shows that the complexities of the quasi-hyperbolic case can indeed be overcome through the use of symplectic geometric tools. The main result, Theorem~\ref{thm:quasi-hyperbolic-spectral}, provides a simultaneous diagonalizing spectral integral formula for Toeplitz operators with symbols invariant under the quasi-hyperbolic action of $\T^{n-1} \times \R_+$ on $D_n$. Our formula holds on any dimension $n \geq 1$, but it remarkably applies in a non-trivial way in the case $n = 1$ in Corollary~\ref{cor:hyperbolic-n=1}. The latter provides a very simple formula, both the expression and proof, in comparison with those from previous works (see \cite{GQVJFA}). The simplicity of Theorem~\ref{thm:quasi-hyperbolic-spectral} allows us to obtain another interesting conclusion: the $C^*$-algebra generated by Toeplitz operators with quasi-hyperbolic symbols in dimension $n$, contains a $C^*$-subalgebra isomorphic to the one generated by Toeplitz operators with quasi-elliptic symbols in dimension $n-1$. Such is the content of Corollary~\ref{cor:quasi-hyperbolic-H_0}. We hope this sort of results might pave the way to a description of the $C^*$-algebras generated by Toeplitz operators with symbols invariant under a MASG for dimensions $n \geq 2$.

We also provide a diagonalizing spectral integral formula for the quasi-elliptic case in Theorem~\ref{thm:quasi-elliptic-spectral}. This allows us to better understand the connection between the quasi-elliptic case and the toral part of the quasi-hyperbolic case, thus leading to the proof of Corollary~\ref{cor:quasi-hyperbolic-H_0}.

We observe that the main breakthrough that allows us to obtain our (relatively) simple spectral integral formulas in the quasi-hyperbolic case is the use of a very useful analytic tool: the measure associated with the Romanovski-Routh polynomials, that is defined below in \eqref{eq:Romanovski-Routh}. We refer to \cite{RomanMartinez,SpecialFunctions,RomanPeetre,RomanWeber} for further details on the importance of such polynomials. Here, we will only mention that the weight associated to the measure \eqref{eq:Romanovski-Routh} is used with the Rodrigues formula to define them. We also note that Romanovski-Routh polynomials have been used before to study Toeplitz operators in \cite{SMRPolyBergmanRomanovski} within the setup of polyanalytic spaces.

An important contribution of this work lies behind the proof of our spectral integral formulas for Toeplitz operators with suitably invariant symbols. More precisely, such formulas are obtained by constructing Bargmann-type transforms based on the geometry provided by Hamiltonian actions and moment maps. This is achieved in Theorems~\ref{thm:Segal-Bargmann-QE} and \ref{thm:Segal-Bargmann-QH} for the quasi-elliptic case and the quasi-hyperbolic case, respectively. In the spirit of the classical Bargmann transform, we prove in those cases that the Bergman space is unitarily equivalent to full $L^2$ spaces in a way that strongly relates to the corresponding actions involved. And this is accomplished using almost exclusively symplectic geometric tools. Hence, this work shows the importance of such geometric tools as a background for Toeplitz operators on Bergman spaces.

As for the distribution of our results, Section~\ref{sec:analysisgeometry} states the basic facts needed from both functional analysis and differential geometry. Section~\ref{sec:actionmomentcoord} introduces coordinates on both $\B^n$ and $D_n$ that can be naturally associated to the MASG's given by the quasi-elliptic and quasi-hyperbolic cases, respectively. Such coordinates are obtained from the moment maps of the corresponding Hamiltonian actions. These constructions allow us to obtain in Section~\ref{sec:Bergman-groupmoment-coord} Bargmann-type transforms for the quasi-elliptic and quasi-hyperbolic cases. As discussed above, Section~\ref{sec:Toeplitz-momentmapsymbols} contains the main conclusions for Toeplitz operators and the $C^*$-algebras that they generate for the MASG's of interest in this work.

\section{Analysis and geometry on the unit ball and the Siegel domain} 
\label{sec:analysisgeometry}
We will denote by $\B^n$ the unit ball in $\C^n$. Recall that $\B^n$ admits an unbounded realization given by
\[
    D_n = \{ z = (z',z_n) \in \C^n \mid \im(z_n) - |z'|^2 > 0  \},
\]
called the $n$-dimensional Siegel domain. Note that here, and in the rest of this work, for every $z \in \C^n$ we write $z = (z',z_n)$, where $z' \in \C^{n-1}$ and $z_n \in \C$. 

As usual, we will denote by $\dif v(z)$ the Lebesgue measure on $\C^n$. For every $\lambda > -1$, we consider the measure 
\[
	\dif v_\lambda(z) = c_\lambda (1 - |z|^2)^\lambda \dif v(z),
\]
where the constant
\[
	c_\lambda = \frac{\Gamma(n+\lambda+1)}{\pi^n \Gamma(\lambda+1)}
		= \frac{(\lambda + 1)_n}{\pi^n}
\]
is chosen so that $v_\lambda(\B^n) = 1$. We recall that the expression $(x)_m$ denotes a Pochhammer symbol. Correspondingly, for the domain $D_n$ we define the measure
\[
	\dif \widehat{v}_\lambda(z) = 
		\frac{c_\lambda}{4}(\im(z_n) - |z'|^2)^\lambda \dif v(z).
\]
The weighted Bergman spaces with weight $\lambda > -1$ on $\B^n$ and $D_n$ are defined by
\begin{align*}
	\Aa_\lambda^2(\B^n) &= \{ f \in L^2(\B^n,v_\lambda) \mid 
				f \text{ is holomorphic } \}, \\
	\Aa_\lambda^2(D_n) &= \{ f \in L^2(D_n,\widehat{v}_\lambda) \mid 
				f \text{ is holomorphic } \},
\end{align*}
respectively. It is well known that both spaces are unitarily equivalent through the Cayley transform (see \cite{QVUnitBall1} for details). Furthermore, they are both closed, in $L^2(\B^n, v_\lambda)$ and $L^2(D_n,\widehat{v}_\lambda)$, respectively, as well as reproducing kernel Hilbert spaces. Their orthogonal projections are given by the expressions
\begin{align*}
	(B_{\B^n,\lambda})f(z) &= 
		\int_{\B^n} \frac{f(w) \dif 
			v_\lambda(z)}{(1-|z|^2)^{n+\lambda+1}}, \\
	(B_{D_n,\lambda})f(z) &=
		\int_{D_n} 
			\frac{f(w) \dif \widehat{v}_\lambda(z)}%
				{\Big(\frac{z_n - \overline{w}_n}{2i} 
					- z'\cdot \overline{w}'\Big)^{n+\lambda+1}},
\end{align*}
respectively. 

In the rest of this work, we will denote by $D$ either of the domains $\B^n$ or $D_n$ when we refer to properties shared by both cases. For every $a \in L^\infty(D)$ and $\lambda > -1$ the Toeplitz operator $T_a = T^{(\lambda)}_a$ acting on the Bergman space $\Aa_\lambda^2(D)$ is given by
\[
	T^{(\lambda)}_a = B_{D,\lambda} \circ M_a.
\]
With this notation, $a$ is called the symbol of $T^{(\lambda)}_a$.

Our main goal is to study commutative $C^*$-algebras generated by Toeplitz operators, with Lie theory and symplectic geometry as our main tools. We now proceed to describe some basic symplectic geometric properties of $\B^n$ and $D_n$. We refer to \cite{QSJFA,QVUnitBall1} for further details on the claims in the rest of this section.

The unit ball $\B^n$ carries the well known Bergman metric which is K\"ahler and invariant under the group of biholomorphisms of $\B^n$. The K\"ahler form for such structure is given by the following expression
\begin{align*}
    (\omega_{\B^n})_z &= i\sum_{j,k=1}^n \frac{(1-|z|^2)\delta_{jk} +
                \overline{z}_j z_k}{(1-|z|^2)^2}
                    \dif z_j \wedge \dif \overline{z}_k \\
        &= \frac{i}{(1-|z|^2)^2} \bigg(
            (1-|z|^2)\sum_{j=1}^n \dif z_j \wedge \dif \overline{z}_j +
                \sum_{j,k=1}^n \overline{z}_j z_k \dif z_j \wedge \dif \overline{z}_k\bigg),
\end{align*}
where $z \in \B^n$. Correspondingly, the Bergman metric on the Siegel domain $D_n$ is K\"ahler and invariant under its group of biholomorphisms. The associated K\"ahler form is now given by
\begin{multline*}
    (\omega_{D_n})_z = \frac{i}{(\im(z_n)-|z'|^2)^2}
        \bigg( (\im(z_n)-|z'|^2) \sum_{j=1}^{n-1}
                \dif z_j \wedge \dif \overline{z}_j
            + \sum_{j,k=1}^{n-1} \overline{z}_j z_k
                \dif z_j \wedge \dif \overline{z}_k  \\
          + \frac{1}{2i}
            \sum_{j=1}^{n-1} (\overline{z}_j \dif z_j\wedge \dif\overline{z}_n
                - z_j \dif z_n \wedge \dif \overline{z}_j)
          + \frac{1}{4} \dif z_n \wedge \dif \overline{z}_n
            \bigg),
\end{multline*}
where $z \in D_n$.

There exist exactly $n+2$ conjugacy classes of (connected) maximal Abelian subgroups (MASG for short) acting biholomorphically on either the unit ball $\B^n$ or the Siegel domain $D_n$. In this work we will be interested in only two of such actions. In the first place, we have the $\T^n$-action on $\B^n$ given by
\[
    t \cdot z = (t_1 z_1, \dots, t_n z_n),
\]
which will be referred as the quasi-elliptic action on the unit ball $\B^n$. Next, we have the $\T^{n-1}\times\R_+$-action on $D_n$ given by
\[
    (t',s) \cdot z = (s^\frac{1}{2} t' z', s z_n),
\]
that we will call the quasi-hyperbolic action on the Siegel domain $D_n$.

Some notions, computations and properties considered in this work are shared by these two actions. Hence, for simplicity we will denote by $(G,D)$ either of the pairs $(\T^n,\B^n)$ or $(\T^{n-1}\times \R_+, D_n)$ with the corresponding actions given above. This convention will be used in the rest of this work without further mention. As a first example of the similarity between these two cases, we note that the Lie algebra of $G$ is $\R^n$ for both choices.

We recall the definition of a moment map for the action of a MASG, which we will restrict to the two cases above. For every $X \in \R^n$, the $G$-action induces a smooth vector field on $D$ given by
\[
    X^\sharp_z =
        \frac{\dif}{\dif t}\sVert[2]_{s=0}
            \exp(sX)\cdot z
\]
for every $z \in D$, where $\cdot$ denotes the $G$-action on $D$.

\begin{definition}
    With the previous notation and conventions, a moment map for the $G$-action on $D$ is a smooth $G$-invariant function $\mu = \mu^G : D \rightarrow \R^n$ such that
    \[
        \dif \mu_X = \omega(X^\sharp, \cdot)
    \]
    for every $X \in \R^n$, where $\omega$ is the K\"ahler form of $D$ and $\mu_X : D \rightarrow \R$ is the smooth function given by
    \[
        \mu_X(z) = \langle \mu(z), X\rangle,
    \]
    for every $z \in D$, where $\langle\cdot,\cdot\rangle$ is the canonical inner product of $\R^n$.
\end{definition}

We note that we have used a definition slightly different from the one found in~\cite{QSJFA}. However, it is straightforward to show their equivalence.

We have proved in \cite{QSJFA} the existence, by explicit computation, of a moment map for every MASG of biholomorphisms of either $\B^n$ or $D_n$. The following formulas were obtained \cite{QSJFA} and will be used in the rest of this work.

For the $\T^n$-action on $\B^n$ a moment map is given by
\[
    \mu(z) = -\frac{1}{1 - |z|^2}
        (|z_1|^2, \dots, |z_n|^2),
\]
for every $z \in \B^n$. The $\T^{n-1}\times\R_+$-action on $D_n$ has a moment map given by
\[
    \mu(z) = -\frac{1}{2(\im(z_n) - |z'|^2)}
        (2|z_1|^2, \dots, 2|z_{n-1}|^2, \re(z_n)),
\]
for every $z \in D_n$.

\section{Group-moment coordinates}\label{sec:actionmomentcoord}
In this section, we will introduce some useful coordinates on $D$ obtained from the $G$-action and its moment map. These are basically obtained by considering the $G$-orbits with natural coordinates from $G$, and then using the corresponding moment map to complete a full set of smooth coordinates for $D$ on an open conull dense subset. This is done after a linear reparametrization of the moment map, which will simplify some of our computations later on. We observe that this does not change in any essential way the use of the moment map. In fact, it is easy to prove that any linear reparametrization of the moment map is also a moment map corresponding to a change of coordinates of the Lie algebra, in this case $\R^n$. We skip the proof of this fact since it will not be used in this work.

\subsection{Quasi-elliptic group-moment coordinates}
\label{subsec:quasi-elliptic-coordinates}
In this subsection, we introduce coordinates on $\B^n$ associated with the action and the moment map for the pair $(\T^n, \B^n)$. The coordinates will be defined on the open conull dense subset given by
\[
    \widehat{\B}^n =
        \{ z \in \B^n \mid z_j \not= 0 \text{ for all $j =1, \dots, n$ }\}.
\]
First, we consider the linear reparametrization $H : \widehat{\B}^n \rightarrow \R^n_+$ of the moment map given by
\[
    H(z) = \frac{1}{1 - |z|^2}
        (|z_1|^2, \dots, |z_n|^2).
\]
Note that $H$ is surjective. In a sense, the map $H$ provides half of the coordinates, and the other half will be obtained from the $\T^n$-action. To achieve this, we look for a smooth section of the map $H$, in other words, a smooth map $\sigma : \R^n_+ \rightarrow \widehat{\B}^n$ such that $H(\sigma(h)) = h$ for every $h \in \R^n_+$. In this work we will choose the map $\sigma : \R^n_+ \rightarrow \widehat{\B}^n$ given by
\[
    \sigma(h) = \frac{1}{(1 + \|h\|_1)^{\frac{1}{2}} }
    (h_1^{\frac{1}{2}}, \dots, h_n^{\frac{1}{2}}),
\]
for every $h \in \R^n_+$, where we will denote from now on $\|x\|_1 = |x_1| + \dots + |x_n|$ for every $x \in \R^n$. It is easily seen by direct computation that $\sigma$ is indeed a smooth section of $H$.

Next, the coordinates on $\widehat{\B}^n$ are obtained by letting $\T^n$ act on the values of the section $\sigma$. More precisely, we consider the map given by
\begin{align*}
	\kappa : \T^n \times \R^n_+ & \longrightarrow \widehat{\B}^n \\
	\kappa(t,h) = t \cdot \sigma(h) 
		&= \frac{1}{(1 + \|h\|_1)^{\frac{1}{2}} }
		(t_1 h_1^{\frac{1}{2}}, \dots, t_n h_n^{\frac{1}{2}}).
\end{align*}
The map $\kappa$ turns out be a diffeomorphism as we now prove. Let us consider the smooth map $\rho : \widehat{\B}^n \rightarrow \T^n$ given~by
\[
    \rho(z) = \bigg(\frac{z_1}{|z_1|}, \dots, \frac{z_n}{|z_n|} \bigg),
\]
for every $z \in \widehat{\B}^n$. This allows us to define the smooth map
\begin{align*}
	\tau : \widehat{\B}^n & \longrightarrow \T^n \times \R^n_+ \\
	\tau(z) = (\rho(z), H(z)) &= \bigg(\frac{z_1}{|z_1|}, \dots, \frac{z_n}{|z_n|},
	\frac{|z_1|^2}{1-|z|^2}, \dots, \frac{|z_n|^2}{1-|z|^2}\bigg),
\end{align*}
which now makes use of the moment map of the $\T^n$-action. A straightforward computation shows that $\kappa$ and $\tau$ are inverses of each other. Hence, $\kappa$ defines a set of coordinates for $\widehat{\B}^n$ and $\tau$ can be considered as the corresponding parametrization of $\widehat{\B}^n$ associated with such coordinates.

We now proceed to compute the Lebesgue measure, and its associated weighted measures, on $\B^n$ in terms of the coordinates given by $\kappa$ and $\tau$. Note that in polar coordinates we can write
\[
	\tau(z) = \bigg(t_1, \dots, t_n,
	\frac{r_1^2}{1-|r|^2}, \dots, \frac{r_n^2}{1-|r|^2}\bigg),
\]
where $z_j = r_j t_j$ and $r_j = |z_j|$, for every $j = 1, \dots, n$. As we have done above, we will denote by $h \in \R_+^n$ the elements in the target space of $H$. By considering the last $n$ coordinates of the previous expression for $\tau$, we obtain the change of variables $\varphi : \B^n \cap \R_+^n \rightarrow \R_+^n$ given by
\[
	h = \varphi(r) =
		\bigg(\frac{r_1^2}{1-|r|^2}, \dots, \frac{r_n^2}{1-|r|^2}\bigg).
\]
For this change of coordinates we have
\[
	1-|r|^2 = \frac{1}{1 + \|h\|_1}.
\]
Furthermore, a straightforward computation shows that 
\[
	\dif \varphi_r = \frac{2}{(1-|r|^2)^2}
		D(r) ((1 - |r|^2) I_n +r^\top r).
\]
Where, from now on, for every $z \in \C^m$ we will denote by $D(z)$ the diagonal matrix whose diagonal entries are given by the components of $z$. For a given $r \in \R^n_+$, let us choose $U \in \OO(n)$ such that $rU = |r|e_n$. Then, we have
\[
	U^\top((1 - |r|^2) I_n +r^\top r) U = (1-|r|^2)I_n + |r|^2 E_{n,n}
\]
where $E_{n,n}$ denotes the $n \times n$ matrix with zero entries everywhere except at position $(n,n)$. From this, it follows immediately that
\[
	\det(\dif \varphi_r) = \frac{2^n \prod_{j=1}^n r_j}{(1-|r|^2)^{n+1}}.
\]
Using polar coordinates and substituting these change of variables computations it follows that the Lebesgue measure on $\B^n$ in the group-moment coordinates of the quasi-elliptic case induces the following measure on $\T^n \times \R^n_+$
\[
	\dif \nu(t,h) 
		= \frac{1}{2^n} \frac{\dif t}{it}  \frac{\dif h}{(1 + 	
				\|h\|_1)^{n+1}},
\]
where we will denote $\frac{\dif t}{it}=\prod_{j=1}^{n} \frac{\dif t_j}{it_j}$. It also follows that, for every $\lambda > -1$, the weighted Lebesgue measure $v_\lambda$ induces the following measure on $\T^n \times \R^n_+$
\[
	\dif \nu_\lambda(t,h) 
		= \frac{c_\lambda}{2^n} \frac{\dif t}{it}  
				\frac{\dif h}{(1 + \|h\|_1)^{n+\lambda+1}}.
\]

\subsection{Quasi-hyperbolic group-moment coordinates}
\label{subsec:quasi-hyperbolic-coordinates}
For this subsection, we now consider coordinates on $D_n$ by using the action and the moment map associated with the pair $(\T^{n-1}\times \R_+, D_n)$. Such coordinates will now be defined on the open conull dense subset given by
\[
    \widehat{D}_n = \{ z \in D_n \mid z_j \not= 0 \text{ for all $j =1, \dots, n-1$ } \}.
\]
We use for this action the linear reparametrization $H : \widehat{D}_n \rightarrow \R^{n-1}_+ \times \R$ of the moment map given by
\[
    H(z) = \frac{1}{\im(z_n) - |z'|^2}
        (|z_1|^2, \dots, |z_{n-1}|^2, \re(z_n)).
\]
This map is surjective and it will provide half of the coordinates. Our goal is to define maps that allow us to obtain the rest of the coordinates from the $\T^{n-1}\times \R_+$-action. As before, we look for a smooth section of the map $H$. In this case, we consider the map $\sigma : \R^{n-1}_+ \times \R \rightarrow \widehat{D}_n$ defined by
\[
    \sigma(h) = (h_1^\frac{1}{2}, \dots, h_{n-1}^\frac{1}{2}, h_n + i(1 + \|h'\|_1)),
\]
for every $h \in \R^{n-1}_+ \times \R$. It is straightforward to verify that $\sigma$ is a section of~$H$.

As before, the coordinates on $\widehat{D}_n$ are obtained by letting the group $\T^{n-1} \times \R_+$ act on the values of $\sigma$. Hence, we consider the map given by
\begin{align*}
	\kappa : \T^{n-1} \times \R_+ \times \R^{n-1}_+ \times \R 
				&\longrightarrow \widehat{D}_n \\
	\kappa(t',s,h) &= (t',s)\cdot \sigma(h) \\
		&= \big(t_1 (sh_1)^\frac{1}{2}, \dots, 
		t_{n-1} (sh_{n-1})^\frac{1}{2}, 
		s(h_n + i(1 + \|h'\|_1))\big).
\end{align*}
We now prove that $\kappa$ is a diffeomorphism. For this, we consider the smooth map $\rho : \widehat{D}_n \rightarrow \T^{n-1} \times \R_+$ given by
\[
    \rho(z) =
    \bigg(\frac{z_1}{|z_1|}, \dots, \frac{z_{n-1}}{|z_{n-1}|}, \im(z_n) - |z'|^2
    \bigg),
\]
for every $z \in \widehat{D}_n$, which is used to define the smooth map
\begin{align*}
	\tau : \widehat{D}_n &\longrightarrow \T^{n-1} \times \R_+ 
			\times \R^{n-1}_+ \times \R	\\
	\tau(z) = (\rho(z), H(z)) 
		=&\; \bigg( \frac{z_1}{|z_1|}, \dots, \frac{z_{n-1}}{|z_{n-1}|},
				\im(z_n) - |z'|^2, \\
				&\frac{|z_1|^2}{\im(z_n) - |z'|^2}, \dots,
				\frac{|z_{n-1}|^2}{\im(z_n) - |z'|^2}, 
				\frac{\re(z_n)}{\im(z_n) - |z'|^2}
			\bigg).
\end{align*}
The smooth map $\tau$ now involves the moment map of the $\T^{n-1} \times \R_+$-action. A direct computation shows that $\kappa$ and $\tau$ are inverses of each other. It follows that the former defines a set of coordinates for $\widehat{D}_n$ and the latter the corresponding parametrization of $\widehat{D}_n$.

We now compute the Lebesgue measure, and the associated weighted measures, on $\widehat{D}_n$ in terms of the coordinates given by $\kappa$ and $\tau$.

Let us consider polar coordinates for $z' \in \C^{n-1}$ and rectangular coordinates for $z_n \in \C$, so that we can write
\[
	\tau(z) = \bigg(
	t', y - |r'|^2, \frac{r_1^2}{y - |r'|^2}, \dots, 
	\frac{r_1^2}{y - |r'|^2}, \frac{x}{y - |r'|^2} 
	\bigg),
\]
where $z_j = r_j t_j$ and $r_j = |z_j|$, for every $j = 1, \dots, n-1$, and $z_n = x + iy$. Next, we use a change of coordinates that uses the last $n+1$ components of the previous expression and thus we define the map $\varphi$ given by
\[
	(h,s) = \varphi(r',x,y) = \bigg(\frac{r_1^2}{y - |r'|^2}, \dots, 
	\frac{r_{n-1}^2}{y - |r'|^2}, \frac{x}{y - |r'|^2}, y - |r'|^2 
	\bigg).
\]
In particular, we have defined $s = y - |r'|^2$. A direct computation yields the differential
\begin{align*}
	\dif \varphi_{(r',x,y)} &=
	\left(
	\begin{array}{ccc}
		\frac{2}{(y - |r'|^2)^2}D(r') ((y-|r'|^2)I_n +r'^\top r') 
		& 0 & -\frac{1}{(y - |r'|^2)^2}D(r') r'^\top \\
		\frac{2x}{(y-|r'|^2)^2}r' & \frac{1}{y-|r'|^2} & -\frac{x}{(y-|r'|^2)^2} \\
		-2r' & 0 & 1
	\end{array} 
	\right) \\
	&=
	D\bigg(\frac{r'}{(y-|r'|^2)^2}, \frac{1}{y-|r'|^2},1\bigg)
	\left(
	\begin{array}{ccc}
		2((y-|r'|^2)I_n +r'^\top r') & 0 & r'^\top \\
		\frac{2x}{y-|r'|^2}r' & 1 & -\frac{x}{y-|r'|^2} \\
		-2r' & 0 & 1
	\end{array}
	\right)
\end{align*}
As before, for a given $r' \in \R^{n-1}_+$, we choose a matrix $U \in \OO(n-1)$ such that $r' U = |r'| e_{n-1}$, and conjugate the second factor in the last line by the block diagonal matrix $\mathrm{diag}(U^{-1},1,1)$ to obtain the determinant
\[
	\det(\dif \varphi_{(r',x,y)}) = 
		\frac{2^{n-1} \prod_{j=1}^{n-1} r_j}{(y - |r'|^2)^n}.
\]
Using the previous polar-rectangular coordinates and substituting our change of variable computations, we conclude that the Lebesgue measure on $D_n$ in the group-moment coordinates defined by the quasi-hyperbolic case induces the following measure on $\T^{n-1} \times \R_+ \times \R^{n-1}_+ \times \R$
\[
	\dif \nu(t',s,h) = 
		\frac{1}{2^{n-1}}\frac{\dif t'}{it'}s^n \dif s \dif h.
\]
It also follows that, for every $\lambda > -1$, the weighted Lebesgue measure $\widehat{v}_\lambda$ induces the following measure on $\T^{n-1} \times \R_+ \times \R^{n-1}_+ \times \R$
\[
	\dif \widehat{\nu}_\lambda(t',s,h) = 
			\frac{c_\lambda}{2^{n+1}}\frac{\dif t'}{it'}s^{n + \lambda} 
					\dif s \dif h.
\]

\section{Bergman spaces in group-moment coordinates}
\label{sec:Bergman-groupmoment-coord}
\subsection{Bergman spaces in quasi-elliptic group-moment coordinates}
\label{subsec:quasi-elliptic-Bergman}
In this subsection, we will use the notation and computations from subsection~\ref{subsec:quasi-elliptic-coordinates} without further mention.

Let us consider the operator
\begin{align*}
	U_0 : L^2(\B^n, v_\lambda) &\rightarrow 
			L^2(\T^n \times \R^n_+,\nu_\lambda)	\\
	U_0 f &= f \circ \kappa.
\end{align*}
By the definition of $\nu_\lambda$, this map is unitary with inverse given by $U_0^{-1}f = f\circ \tau$. Observe that we can write
\[
	L^2(\T^n \times \R^n_+,\nu_\lambda) =
		L^2(\T^n) \otimes L^2(\R^n_+, \widetilde{\nu}_\lambda),
\]
where we will denote from now on
\[
	\dif \widetilde{\nu}_\lambda(h) = 
		\frac{c_\lambda \dif h}{2^n(1+\|h\|_1)^{n+\lambda+1}}.
\]
Next, we consider the unitary operator
\begin{align*}
	U_1 : L^2(\T^n) \otimes L^2(\R^n_+,\widetilde{\nu}_\lambda) 
			&\longrightarrow
				\ell^2(\Z^n) \otimes 
				L^2(\R^n_+, \widetilde{\nu}_\lambda) 
				= \ell^2\big(\Z^n,
				L^2(\R^n_+, \widetilde{\nu}_\lambda)\big) \\
	U_1 &= \Fa_n \otimes I,
\end{align*}
where $\Fa_n$ is the discrete Fourier transform given by
\[
 	(\Fa_n f)(p) = 
 		\frac{1}{(2\pi)^\frac{n}{2}} 
 		\int_{\T^n} f(t)t^{-p} \frac{dt}{it},
\]
for every $p \in \Z^n$. The next result allows us to provide an alternative description of the Bergman space as well as a Bargmann-type transform.

\begin{theorem}\label{thm:Segal-Bargmann-QE}
	For every $n \geq 1$ and $\lambda > -1$, the map 
	\begin{align*}
		W_\lambda : \ell^2(\Z^n_+) &\longrightarrow
					\ell^2\big(\Z^n,
						L^2(\R^n_+,\widetilde{\nu}_\lambda)\big) \\
		W_\lambda(\psi)(p,h) &=
			\sqrt{\frac{(2\pi)^n (n + \lambda + 1)_{|p|}}%
				{p! (\lambda + 1)_n}} 
				\frac{h^\frac{p}{2}}%
					{(1 + \|h\|_1)^{\frac{|p|}{2}}}
			\psi(p)	
	\end{align*}
	where $(p,h) \in \Z^n_+ \times \R^n_+$, is an isometry that satisfies
	\begin{enumerate}
		\item $W_\lambda(\ell^2(\Z^n_+)) = (U_1 \circ U_0) 	
			(\Aa^2_\lambda(\B^n))$, and
		\item the adjoint $W_\lambda^*$ is given by
			\[
				(W_\lambda^* \varphi)(p) 
				= \sqrt{\frac{(\lambda + 1)_n (n + \lambda + 1)_{|p|}}%
					{(2\pi)^n p!}}
					\int_{\R^n_+} \frac{h^\frac{p}{2} \varphi(p,h) \dif h}%
					{(1 + \|h\|_1)^{\frac{|p|}{2}+n+\lambda+1}},
			\]
			for every $p \in \Z^n_+$, where $\varphi \in \ell^2\big(\Z^n,
			L^2(\R^n_+,\widetilde{\nu}_\lambda)\big)$.
	\end{enumerate}
	In particular, $R_\lambda = W_\lambda^* \circ U_1 \circ U_0 : L^2(\B^n, v_\lambda) \rightarrow \ell^2(\Z^n_+)$ is a surjective partial isometry with initial space $\Aa^2_\lambda(\B^n)$.
\end{theorem}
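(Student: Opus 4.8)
The plan is to reduce everything to the standard monomial orthonormal basis of $\Aa^2_\lambda(\B^n)$ and to track it through the two unitaries $U_0$ and $U_1$. Recall that the monomials $z^p$, $p \in \Z^n_+$, are mutually orthogonal in $L^2(\B^n, v_\lambda)$, and a direct computation (a one-variable beta integral in each slot) gives $\|z^p\|^2_{v_\lambda} = p!/(n+\lambda+1)_{|p|}$; hence $e_p(z) = \sqrt{(n+\lambda+1)_{|p|}/p!}\, z^p$ is an orthonormal basis of $\Aa^2_\lambda(\B^n)$ (equivalently, this reads off the expansion of the reproducing kernel $(1 - z\cdot\overline{w})^{-(n+\lambda+1)}$). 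Everything else will follow once the images $U_1 U_0 e_p$ are understood.

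First I would compute $U_0 e_p = e_p \circ \kappa$. From the explicit form of $\kappa$ one gets $z^p \circ \kappa = t^p\, h^{p/2} (1+\|h\|_1)^{-|p|/2}$, so $U_0 e_p$ factors as the character $t \mapsto t^p$ of $\T^n$ times the radial profile $b_p(h) = h^{p/2}(1+\|h\|_1)^{-|p|/2}$ in the $h$-variable. Applying $U_1 = \Fa_n \otimes I$ and using $\Fa_n(t^p)(q) = (2\pi)^{n/2}\delta_{p,q}$, each monomial collapses to a single Fourier mode: $U_1 U_0 e_p$ is supported at the slot $q = p \in \Z^n_+$ and is a scalar multiple of $b_p$ there. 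This is the structural heart of the argument, and it also explains why the domain of $W_\lambda$ is $\ell^2(\Z^n_+)$ rather than $\ell^2(\Z^n)$: holomorphic functions carry only nonnegative powers, so no mode $p$ with some $p_j<0$ can occur. Comparing with the definition of $W_\lambda$, I would observe that $W_\lambda(\delta_p)$ is exactly this element, so $W_\lambda$ sends the standard orthonormal basis $\{\delta_p\}_{p\in\Z^n_+}$ to the system $\{U_1 U_0 e_p\}_{p\in\Z^n_+}$.

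From here the two itemized claims are immediate. Since $U_1 U_0$ is unitary and $\{e_p\}$ is an orthonormal basis of $\Aa^2_\lambda(\B^n)$, the family $\{U_1 U_0 e_p\}$ is an orthonormal basis of $(U_1 U_0)(\Aa^2_\lambda(\B^n))$; matching it with $\{W_\lambda \delta_p\}$ yields both that $W_\lambda$ is an isometry (it carries an orthonormal basis to an orthonormal family) and that its range is precisely $(U_1 U_0)(\Aa^2_\lambda(\B^n))$, which is claim (1). The one genuine computation needed to pin down the normalizing constants is the multivariate beta (Dirichlet) integral $\int_{\R^n_+} h^p (1+\|h\|_1)^{-(|p|+n+\lambda+1)}\,\dif h = p!\,\Gamma(\lambda+1)/\Gamma(n+|p|+\lambda+1)$, which converts $\|b_p\|^2_{\widetilde{\nu}_\lambda}$ into the reciprocal of the constant appearing in $W_\lambda$; I expect this integral, together with the bookkeeping of the Pochhammer symbols and of $c_\lambda$, to be the main (though routine) obstacle. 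For claim (2), the adjoint is read off by duality, $(W_\lambda^*\varphi)(p) = \langle \varphi, W_\lambda \delta_p\rangle$, and since $W_\lambda\delta_p$ is supported in slot $p$ with a real profile proportional to $b_p$, this pairing is exactly the weighted integral against $b_p$ over $(\R^n_+, \widetilde{\nu}_\lambda)$, which rearranges into the stated formula (the measure $\widetilde{\nu}_\lambda$ supplying the extra factor $(1+\|h\|_1)^{-(n+\lambda+1)}$).

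Finally, for the concluding assertion about $R_\lambda = W_\lambda^* \circ U_1 \circ U_0$, I would use that an isometry $W_\lambda$ satisfies $W_\lambda^* W_\lambda = I$ and that $W_\lambda W_\lambda^*$ is the orthogonal projection onto $\mathrm{range}(W_\lambda) = (U_1 U_0)(\Aa^2_\lambda(\B^n))$. Consequently $R_\lambda^* R_\lambda = (U_1 U_0)^* (W_\lambda W_\lambda^*)(U_1 U_0)$ is the orthogonal projection onto $(U_1 U_0)^{-1}(\mathrm{range}\,W_\lambda) = \Aa^2_\lambda(\B^n)$, which is precisely the statement that $R_\lambda$ is a partial isometry with initial space $\Aa^2_\lambda(\B^n)$. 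Surjectivity onto $\ell^2(\Z^n_+)$ then follows because $W_\lambda^*$ restricted to its range is the inverse of the isometry $W_\lambda$, while $U_1 U_0$ maps $\Aa^2_\lambda(\B^n)$ onto that range.
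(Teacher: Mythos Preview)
Your proof is correct and takes a genuinely different route from the paper. The paper proceeds by transporting the Cauchy--Riemann operators $\partial/\partial\overline{z}_j$ through $\kappa$ via the chain rule, obtaining a first-order system on $\T^n\times\R^n_+$ whose $L^2$ solutions characterize $U_0(\Aa^2_\lambda(\B^n))$; applying $U_1$ turns this into the system \eqref{eq:CR-QE-equations}, whose general solution \eqref{eq:CR-QE-solutions} is written down explicitly, after which the $L^2$ condition forces $p\in\Z^n_+$ and fixes the normalizing constant through the same Dirichlet integral you quote. You bypass the PDE entirely by tracking the monomial orthonormal basis $\{e_p\}$ through $U_1U_0$: since $e_p\circ\kappa$ factors as $t^p$ times a radial profile and $\Fa_n$ collapses $t^p$ to a single Fourier mode, the identification $W_\lambda\delta_p = U_1U_0 e_p$ is immediate and all claims reduce to unitarity of $U_1U_0$ plus completeness of the monomials. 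Your argument is shorter and more elementary here, and makes the isometry of $W_\lambda$ and the description of its range transparent without solving any equations. The paper's PDE route, by contrast, yields an intrinsic characterization of the Bergman space in the new coordinates that does not rely on knowing a basis in advance; this is precisely the template that carries over to the quasi-hyperbolic Theorem~\ref{thm:Segal-Bargmann-QH}, where no explicit orthonormal basis adapted to the $\T^{n-1}\times\R_+$-action is readily available and your basis-tracking strategy would be much harder to implement.
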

\begin{proof}
	We start by describing the space $(U_1\circ U_0)(\Aa^2_\lambda(\B^n))$. Observe that for every smooth function $f : \T^n \times \R^n_+ \rightarrow \C$ we have
	\begin{multline*}
		\bigg(\frac{\partial}{\partial \overline{z}_j} 
		(f\circ\tau)\bigg)(\kappa(t,h)) = \\
		= t_j\big(h_j(1+\|h\|_1)\big)^\frac{1}{2}
		\bigg(
		-\frac{t_j}{2 h_j}
		\frac{\partial f}{\partial t_j}(t,h) + 
		\frac{\partial f}{\partial h_j}(t,h) +
		\sum_{k=1}^n h_k \frac{\partial f}{\partial h_k}(t,h)
		\bigg),
	\end{multline*}
	for every $j =1, \dots, n$. This is obtained as a straightforward application of the chain rule. For every $j=1, \dots, n$, the vanishing of the previous equations is equivalent to the following system of equations
	\[
		\frac{t_j}{2h_j}\frac{\partial f}{\partial t_j} 
		= \frac{\partial f}{\partial h_j}
		+ \sum_{k=1}^n h_k \frac{\partial f}{\partial h_k},
	\]
	where $j = 1, \dots, n$ and $f : \T^n \times \R^n_+ \rightarrow \C$ is smooth. It follows that $U_0(\Aa^2_\lambda(\B^n))$ is the space of smooth functions $f \in L^2(\T^n \times \R^n_+, \nu_\lambda)$ that satisfy the previous system of equations. Applying the operator $U_1$ to the latter we conclude that $(U_1 \circ U_0)(\Aa^2_\lambda(\B^n))$ consists of the functions $\varphi \in \ell^2\big(\Z^n, L^2(\R^n_+, \widetilde{\nu}_\lambda)\big)$ that satisfy the system of equations
	\begin{equation} 
		\label{eq:CR-QE-equations}
		\frac{p_j}{2h_j} \varphi(p,h) = \frac{\partial \varphi}{\partial h_j}(p,h)
		+ \sum_{k=1}^n h_k \frac{\partial \varphi}{\partial h_k}(p,h),
	\end{equation}
	at every $(p,h) \in \Z^n \times \R^n_+$ and for every $j = 1, \dots, n$. It is straightforward to check that the general solution of \eqref{eq:CR-QE-equations} is given by
	\begin{equation}
		\label{eq:CR-QE-solutions}
		\varphi(p,h) = \frac{h^{\frac{p}{2}}}{(1 + \|h\|_1)^{\frac{|p|}{2}}}
					\psi(p),
	\end{equation}
	for every $(p,h) \in \Z^n \times \R^n_+$, where $\psi : \Z^n \rightarrow \C$ is arbitrarily given. In particular, the functions given by the expression of $W_\lambda(\psi)$ in the statement are solutions to the system of equations \eqref{eq:CR-QE-equations} for any $\psi : \Z^n_+ \rightarrow \C$.
	
	We now consider the condition $\varphi \in \ell^2\big(\Z^n, L^2(\R^n_+, \widetilde{\nu}_\lambda)\big)$. For $\varphi$ of the form \eqref{eq:CR-QE-solutions} this is equivalent to the requirement that
	\[
		\|\varphi\|^2 = 
			\frac{c_\lambda}{2^n} \sum_{p \in \Z^n} |\psi(p)|^2
				\Bigg(
				\int_{\R^n_+} 
					\frac{h^p \dif h}{(1 + \|h\|_1)^{|p|+n+\lambda+1}}
				\Bigg) < \infty.
	\]
	It is well known (see \cite[4.683(3)]{GR}) that the integral inside parentheses above is finite exactly for $p \in \Z^n_+$ and in that case it has the following value
	\[
		\int_{\R^n_+} 
				\frac{h^p \dif h}{(1 + \|h\|_1)^{|p|+n+\lambda+1}} 
				= \frac{p! \Gamma(n + \lambda + 1)}%
						{\Gamma(|p| + n + \lambda + 1)}.
	\]
	Hence, the $L^2$ condition for solutions of the form \eqref{eq:CR-QE-solutions} requires to consider only functions $\psi \in \ell^2(\Z^n_+)$. Furthermore, we also conclude that multiplying a given $\psi \in \ell^2(\Z^n_+)$ in the solution \eqref{eq:CR-QE-solutions} by the factor
	\begin{align*}
		\sqrt{\frac{2^n}{c_\lambda}
		\frac{\Gamma(|p| + n + \lambda + 1)}%
		{p! \Gamma(n + \lambda + 1)}} 
		&= \sqrt{\frac{(2\pi)^n\Gamma(|p| + n + \lambda + 1)}%
				{p! (\lambda + 1)_n \Gamma(n+\lambda+1)}} \\
		&= \sqrt{\frac{(2\pi)^n(n + \lambda + 1)_{|p|}}%
				{p! (\lambda + 1)_n}}
	\end{align*}
	we obtain an isometry mapping $\ell(\Z^n_+)$ onto $(U_1 \circ U_0)(\Aa^2_\lambda(\B^n))$. But this yields precisely the definition of $W_\lambda$, and so we thus conclude that it is an isometry. Hence, (1) from our statement has been proved. The formula for $W_\lambda^*$ stated in (2) is easily obtained by considering the identity $\langle W_\lambda^* \varphi, \psi \rangle = \langle \varphi, W_\lambda \psi \rangle$ which holds for $\varphi \in \ell^2\big(\Z^n, L^2(\R^n_+,\widetilde{\nu}_\lambda)\big)$ and $\psi \in \ell^2(\Z^n_+)$.	
	
	The last claim now follows from the basic properties of partial isometries.
\end{proof}

\subsection{Bergman spaces in quasi-hyperbolic group-moment coordinates}
\label{subsec:quasi-hyperbolic-Bergman}
We will now use the notation and computations from subsection~\ref{subsec:quasi-hyperbolic-coordinates}.

Let us consider the operator
\begin{align*}
	U_0 : L^2(D_n, \widehat{v}_\lambda) &\longrightarrow 
		L^2(\T^{n-1} \times \R_+ \times \R^{n-1}_+ \times \R,
		\widehat{\nu}_\lambda)	\\
	U_0 f &= f \circ \kappa,
\end{align*}
which is unitary with inverse given by $U_0^{-1}f = f\circ \tau$ by the choice of $\widehat{\nu}_\lambda$. As before, we can use the identification
\begin{multline*}
	L^2(\T^{n-1} \times \R_+ \times \R^{n-1}_+ \times \R, 	
		\widehat{\nu}_\lambda) \\ 
		= L^2(\T^{n-1}) 
			\otimes L^2(\R_+, s^{n+\lambda}\dif s) 
			\otimes L^2\Big(\R^{n-1}_+ 
				\times \R, \frac{c_\lambda}{2^{n+1}} \dif h\Big). 
\end{multline*}
We now consider the unitary operator $U_1$ that maps the latter
onto 
\begin{multline*}
	L^2\Big(\Z^{n-1} \times \R \times \R^{n-1}_+ \times \R, 	
		\frac{c_\lambda}{2^{n+1}} \dif p' \dif \xi \dif h\Big)  \\
		= \ell^2(\Z^{n-1}) \otimes L^2(\R) \otimes L^2(\R^{n-1}_+ \times \R, 	
			\frac{c_\lambda}{2^{n+1}} \dif h)	
\end{multline*}
given by
\[
		U_1 = \Fa_{n-1} \otimes \Ma \otimes I,
\]
where $\Fa_{n-1}$ is the discrete Fourier transform defined as in subsection~\ref{subsec:quasi-elliptic-coordinates} and $\Ma$ is the following Mellin transform 
\begin{align*}
	\Ma : L^2(\R_+,s^{n+\lambda} \dif s) 
		&\longrightarrow L^2(\R) \\
	(\Ma f)(\xi) &= \frac{1}{\sqrt{2\pi}}
		\int_0^\infty f(s) s^{\frac{n + \lambda - 1}{2} -i\xi} \dif s.
\end{align*}
We next consider a further unitary map that will require the introduction of some notation.

Let us denote from now on by $F : \Z^{n-1}_+ \times \R \times \R^{n-1}_+ \times \R \rightarrow \C$ the function given by the following expression
\[
	F(p',\xi,h',u) 
	= (1 + \|h'\|_1)^{i\xi - \frac{n + \lambda}{2}}
		(i + u)^{i\xi - \frac{|p'| + n + \lambda + 1}{2}}.
\]
Note that we will consider the branch of the logarithm for which the argument lies in the interval $(0,2 \pi)$. With this convention, it is easy to prove that
\[
	|F(p',\xi,h',u)|^2 
	= \frac{e^{-2\xi \arccot(u)}}%
			{(1 + \|h'\|_1)^{n + \lambda}%
				(1 + u^2)^{\frac{|p'| + n + \lambda + 1}{2}}},
\]
for every $(p',\xi,h',u) \in \Z^{n-1}_+ \times \R \times \R^{n-1}_+ \times \R$. Here, we have used the branch of $\arccot$ defined on $\R$ that takes values in $(0,\pi)$.

On the other hand, we will consider the following weighted measure on $\R$
\begin{equation}\label{eq:Romanovski-Routh}
	\frac{e^{-\alpha\arccot(u)}}{(1 + u^2)^{1-\beta}} \dif u.
\end{equation}
As noted in \cite{RomanMartinez}, the weight involved is precisely the one used to defined the so-called Romanovski-Routh polynomials through the Rodrigues formula. We will denote by $V(\alpha, \beta)$ the corresponding total measure. In other words, we have
\[
	V(\alpha,\beta) 
		= \int_\R 
			\frac{e^{-\alpha\arccot(u)}}{(1 + u^2)^{1-\beta}} \dif u,
\]
which is clearly finite whenever $\beta < 0$.

With the previous notation, we consider the operator $U_2$ that maps the space 
\[
	L^2\Big(\Z^{n-1} \times \R \times \R^{n-1}_+ \times \R, \frac{c_\lambda}{2^{n+1}} \dif p' \dif \xi \dif h\Big)
\]
to the space 
\begin{multline*}
	L^2 \Big(
		\Z^{n-1} \times \R \times \R^{n-1}_+ \times \R, 
		\frac{c_\lambda}{2^{n+1}} |F(p',\xi,h',u)|^2
			\dif p' \dif \xi \dif h' \dif u 
		\Big)   \\
	= \bigoplus_{p' \in \Z^{n-1}} 
		L^2 \Big(
			\R \times \R^{n-1}_+ \times \R, 
			\frac{c_\lambda}%
			{2^{n+1}} |F(p',\xi,h',u)|^2 \dif \xi \dif h' \dif u 
		\Big) 
\end{multline*}
given by
\[
	(U_2 \varphi)(p',\xi,h',u) 
		= \varphi\big(p',\xi,h', (1 + \|h'\|_1)u\big)
			\frac{(1 + \|h'\|_1)^\frac{1}{2}}{F(p',\xi,h',u)},
\]
for every $(p',\xi,h',u) \in \Z^{n-1}_+ \times \R \times \R^{n-1}_+ \times \R$. A straightforward computation shows that $U_2$ is a well defined unitary operator.

The next result yields our alternative description of the Bergman space, as well as a Bargmann-type transform, in the current quasi-hyperbolic case.

\begin{theorem}\label{thm:Segal-Bargmann-QH}
	For every $n \geq 1$ and $\lambda > -1$, the operator $W_\lambda$ defined on the Hilbert space $\ell^2\big(\Z^{n-1}_+,L^2(\R)\big)$ and taking values into the Hilbert space
	\[
		L^2 \Big(
		\Z^{n-1} \times \R \times \R^{n-1}_+ \times \R, 
		\frac{c_\lambda}{2^{n+1}} |F(p',\xi,h',u)|^2
		\dif p' \dif \xi \dif h' \dif u 
		\Big) 
	\]
	given by 
	\[
		(W_\lambda \psi)(p',\xi,h',u)
		= \sqrt{\frac{2(2\pi)^n (n+\lambda)_{|p'|}}%
			{p'!(\lambda+1)_n V\big(
				2\xi, -\frac{|p'|+n+\lambda-1}{2}
				\big)}}
			\frac{\big(h'\big)^\frac{p'}{2}}%
				{(1 + \|h'\|_1)^\frac{|p'|}{2}} \psi(p',\xi)
	\]
	where $(p',\xi) \in \Z^{n-1}_+ \times \R$, is an isometry that satisfies
	\begin{enumerate}
		\item $W_\lambda\big(\ell^2\big(\Z^{n-1}_+,L^2(\R)\big)\big)
		= (U_2 \circ U_1 \circ U_0)(\Aa^2_\lambda(D_n))$, and
		\item the adjoint $W_\lambda^*$ is given by
		\begin{align*}
			(&W_\lambda^* \varphi)(p',\xi) 
				= \sqrt{\frac{(\lambda+1)_n (n+\lambda)_{|p'|}}%
					{2(2\pi)^n  p'!
						V\big(2\xi, -\frac{|p'|+n+\lambda-1}{2}
						\big)}} \times \\
				&\times \int_{\R^{n-1}_+ \times \R}
					\frac{\big(h'\big)^\frac{p'}{2}}%
						{(1 + \|h'\|_1)^{\frac{|p'|}{2}+n+\lambda}}
					\frac{e^{-2\xi \arccot(u)}}%
						{(1 + u^2)^{\frac{|p'| + n + \lambda + 1}{2}}}
					\varphi(p',\xi,h',u) \dif h' \dif u,
		\end{align*}
		for every $p' \in \Z^{n-1}_+$, $\xi \in \R$, where $\varphi$ belongs to the second listed Hilbert space above.
	\end{enumerate}
	In particular, $R_\lambda = W_\lambda^* \circ U_2 \circ U_1 \circ U_0 : L^2(D_n, \widehat{v}_\lambda) \rightarrow \ell^2\big(\Z^{n-1}_+,L^2(\R)\big)$ is a surjective partial isometry with initial space $\Aa^2_\lambda(D_n)$.
\end{theorem}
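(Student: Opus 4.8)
The plan is to follow closely the architecture of the proof of Theorem~\ref{thm:Segal-Bargmann-QE}, adapting each step to the richer geometry of $D_n$. First I would determine the image $(U_2 \circ U_1 \circ U_0)(\Aa^2_\lambda(D_n))$ by characterizing holomorphicity in the quasi-hyperbolic group-moment coordinates. For holomorphic $f$, I would compute each $\frac{\partial}{\partial \overline{z}_j}(f \circ \tau)$ by the chain rule. For the indices $j = 1, \dots, n-1$ associated with the $z'$-block, where $|z_j|^2 = s\,h_j$, the resulting equations should be formally identical to those of the quasi-elliptic case up to the presence of the scaling parameter $s$. The genuinely new equation is the one for $j = n$: since $z_n = x + iy$ carries a single complex structure while the coordinates split it as $s = y - |z'|^2$ and $h_n = x/s$, the Cauchy--Riemann operator $\frac{\partial}{\partial\overline{z}_n}$ couples the $s$- and $h_n$-derivatives, and this coupling has no analogue in the purely toral quasi-elliptic setting.

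Next I would transfer this system through $U_0$ and then through $U_1 = \Fa_{n-1} \otimes \Ma \otimes I$. The discrete Fourier transform $\Fa_{n-1}$ converts each $t_j$-derivative into multiplication by $p_j'$, exactly as before, while the Mellin transform $\Ma$ converts the Euler operator $s\frac{\partial}{\partial s}$ generating the $\R_+$-scaling into multiplication by a factor of the form $i\xi - c$ with $c$ a constant. This produces a system of equations for $\varphi(p',\xi,h',h_n)$. I would then solve it: the $h_j'$-equations reproduce, essentially unchanged, the factor $(h')^{p'/2}/(1+\|h'\|_1)^{|p'|/2}$ of the quasi-elliptic case, while the $h_n$-equation is a first-order linear ODE whose solution is precisely the complex power $(i + u)^{i\xi - \frac{|p'|+n+\lambda+1}{2}}$ recorded in $F$, once one writes $h_n = (1+\|h'\|_1)u$. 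Applying $U_2$ then strips off the factor $F$ together with this rescaling of $h_n$, so that the image consists exactly of the functions displayed in the definition of $W_\lambda\psi$, for arbitrary $\psi(p',\xi)$.

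It remains to impose the $L^2$ condition and fix the normalization. The squared norm of such a solution factors as a product of a $u$-integral and an $h'$-integral. The $u$-integral is, by the computation of $|F|^2$ recorded before the statement and the definition in \eqref{eq:Romanovski-Routh}, exactly the total Romanovski--Routh measure $V\big(2\xi, -\frac{|p'|+n+\lambda-1}{2}\big)$, which is finite since its second argument is negative. The $h'$-integral is the same beta-type integral from \cite[4.683(3)]{GR} used in the quasi-elliptic case, finite precisely when $p' \in \Z^{n-1}_+$ and equal to $p'!\,\Gamma(n+\lambda)/\Gamma(|p'|+n+\lambda)$. Combining the two evaluations yields the normalizing constant $\sqrt{2(2\pi)^n(n+\lambda)_{|p'|}/\big(p'!(\lambda+1)_n\, V(2\xi,-\frac{|p'|+n+\lambda-1}{2})\big)}$ written in the statement, which shows that $W_\lambda$ is an isometry with the asserted image, proving~(1). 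The adjoint formula~(2) then follows directly from the identity $\langle W_\lambda^*\varphi,\psi\rangle = \langle\varphi,W_\lambda\psi\rangle$ against the $|F|^2$-weighted measure, and the final partial-isometry claim from the standard properties of partial isometries, since each $U_i$ is unitary.

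I expect the main obstacle to be the $j = n$ Cauchy--Riemann equation and the subsequent ODE. Unlike the quasi-elliptic case, where every equation decouples into a single moment variable, here the single complex structure on $z_n$ forces a coupling between the group parameter $s$ and the moment coordinate $h_n$; carrying this coupling correctly through the Mellin transform and integrating the resulting ODE is what produces the complex exponent and the $\arccot$ weight, and hence the Romanovski--Routh measure. This is the one step with no counterpart in Theorem~\ref{thm:Segal-Bargmann-QE}, and it is where the bulk of the new computational effort lies.
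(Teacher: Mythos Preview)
Your proposal is correct and follows essentially the same architecture as the paper's proof: compute the Cauchy--Riemann equations in group-moment coordinates, push them through $U_1$ (discrete Fourier on $\T^{n-1}$, Mellin on $\R_+$) to obtain a first-order system in the $h$-variables, identify the general solution as the product of $(h')^{p'/2}$ with the complex power encoded in $F$, strip off $F$ via $U_2$, and then normalize using the Romanovski--Routh total mass $V$ together with the same beta-type integral from the quasi-elliptic case. One small caution: after applying $U_1$ the $h_j'$-equations are not literally the quasi-elliptic ones but carry an extra term with denominator $i(1+\|h'\|_1)+h_n$, so the $h'$- and $h_n$-dependences are entangled at the level of the system and only separate once you write down the full solution and factor it; be prepared for that coupling rather than expecting the $h'$-block to decouple cleanly from the outset.
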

\begin{proof}
	We start with a description of the space $(U_1 \circ U_0)(\Aa^2_\lambda(D_n))$. First observe that for every smooth function $f$ defined on $\T^{n-1} \times \R_+ \times \R^{n-1}_+ \times \R$ we have
	\begin{align*}
		\bigg(\frac{\partial}{\partial \overline{z}_j} 
			(f\circ\tau)\bigg)(\kappa(t',s,h)) =&\; 
			t_j \sqrt{\frac{h_j}{s}}
			\Bigg(
			\sum_{k=1}^n h_k \frac{\partial f}{\partial h_k}(t',s,h)
			+ \frac{\partial f}{\partial h_j}(t',s,h) \\
		&- s \frac{\partial f}{\partial s}(t',s,h)
			- \frac{t_j}{2h_j}\frac{\partial f}{\partial t_j}(t',s,h)
			\Bigg), \\
			\bigg(\frac{\partial}{\partial \overline{z}_n} 
			(f\circ\tau)\bigg)(\kappa(t',s,h)) =&\; 
			\frac{1}{2is}
			\Bigg(
			\sum_{k=1}^n h_k \frac{\partial f}{\partial h_k}(t',s,h)
			\\
		&- s \frac{\partial f}{\partial s}(t',s,h)
			+ i \frac{\partial f}{\partial h_n}(t',s,h)
		\Bigg),	
	\end{align*}
	where $j = 1, \dots, n-1$. This is an immediate consequence of the chain rule. The vanishing of the previous expressions is equivalent to the following system of equations
	\begin{align*}
		\frac{\partial f}{\partial h_j}
		&= \frac{t_j}{2h_j}\frac{\partial f}{\partial t_j}
			+ i \frac{\partial f}{\partial h_n}, \\
			i \frac{\partial f}{\partial h_n}
		&= s \frac{\partial f}{\partial s}
			- \sum_{k=1}^n h_k \frac{\partial f}{\partial h_k},
	\end{align*}
	where $j = 1, \dots, n-1$ and $f : \T^{n-1} \times \R_+ \times \R_+^{n-1} \times \R \rightarrow \C$ is smooth. Hence, $U_0(\Aa^2_\lambda(D_n))$ consists of the functions $f$ that belong to $L^2(\T^{n-1} \times \R_+ \times \R^{n-1}_+ \times \R, \widehat{\nu}_\lambda)$ and that satisfy the latter set of equations. Next, we apply the operator $U_1$ to the latter and some algebraic manipulations to obtain that $(U_1 \circ U_0)(\Aa^2_\lambda(D_n))$ is the subspace of functions $\varphi \in L^2\Big(\Z^{n-1} \times \R \times \R^{n-1}_+ \times \R, \frac{c_\lambda}{2^{n+1}} \dif p' \dif \xi \dif h\Big)$ that satisfy the next system of equations
	\begin{align}
		\label{eq:CR-QH-discrete}
		\frac{\partial \varphi}{\partial h_j}(p',\xi,h)
		&= \bigg(\frac{p_j}{2h_j}
			+ i\frac{i\xi - \frac{|p'| + n + \lambda + 1}{2}}%
			{i(1 + \|h'\|_1) + h_n}
			\bigg)\varphi(p',\xi,h)  \\
				\frac{\partial \varphi}{\partial h_n}(p',\xi,h) 
		&= \frac{i\xi - \frac{|p'| + n + \lambda + 1}{2}}%
			{i(1 + \|h'\|_1) + h_n}
			\varphi(p',\xi,h), \notag
	\end{align}
	at every $(p',\xi,h) \in \Z^{n-1} \times \R \times \R_+^{n-1} \times \R$ and where $j =1, \dots, n-1$. The general solution of the system of equations \eqref{eq:CR-QH-discrete} is easily seen to be given by smooth functions $\varphi : \Z^{n-1} \times \R \times \R_+^{n-1} \times \R \rightarrow \C$ of the form
	\begin{equation}
		\label{eq:CR-QH-solutions-1}
		\varphi(p',\xi,h) = \psi(p',\xi) 
			\big(h'\big)^{\frac{p'}{2}} 
			\big(
				i(1 + \|h'\|_1) + h_n
			\big)^{i\xi - \frac{|p'| + n + \lambda + 1}{2}}
	\end{equation}
	for every $(p',\xi,h) \in \Z^{n-1} \times \R \times \R_+^{n-1} \times \R$, where $\psi : \Z^{n-1} \times \R \rightarrow \C$ is any smooth function. Without loss of generality, we can rewrite these solutions as
	\begin{equation}
		\label{eq:CR-QH-solutions-2}
		\varphi(p',\xi,h) 
			= \frac{\psi(p',\xi)}{V\Big(
				2\xi, -\frac{|p'|+n+\lambda-1}{2}
				\Big)^\frac{1}{2}}
			\frac{\big(h'\big)^{\frac{p'}{2}}}%
				{(1 + \|h'\|_1)^\frac{|p'|}{2}} 
			\frac{F\Big(p',\xi,h',\frac{h_n}{1 + \|h'\|_1}\Big)}%
				{(1 + \|h'\|_1)^\frac{1}{2}}
	\end{equation}
	where the smooth function $\psi : \Z^{n-1} \times \R \rightarrow \C$ and $(p',\xi,h)$ are as above. This uses the definitions of $F$, $V$ and our choice of the branch of $\log$. It remains to consider the $L^2$ condition on these solutions. For this, we apply the expression used to define $U_2$ to functions $\varphi$ as in \eqref{eq:CR-QH-solutions-2}, which yields the following
	\begin{equation}
		\label{eq:CR-QH-solutions-3}
		(U_2\varphi)(p',\xi,h',u) 
			= \frac{\psi(p',\xi)}{V\Big(
				2\xi, -\frac{|p'|+n+\lambda-1}{2}
					\Big)^\frac{1}{2}}
				\frac{\big(h'\big)^{\frac{p'}{2}}}%
					{(1 + \|h'\|_1)^\frac{|p'|}{2}},
	\end{equation}
	where $(p',\xi,h',u) \in \Z^{n-1} \times \R \times \R^{n-1}_+ \times \R$. Hence, we consider $L^2$-integrability by computing
	\begin{align*}
		\|U_2\varphi\|^2 
			=&\; \frac{c_\lambda}{2^{n+1}%
				V\Big(
				2\xi, -\frac{|p'|+n+\lambda-1}{2}
				\Big)} \times \\
			&\times \sum_{p' \in \Z^{n-1}} 
				\int_{\R \times \R^{n-1}_+ \times \R}
					|\psi(p',\xi)|^2 
					\frac{\big(h'\big)^{p'}}%
					{(1 + \|h'\|_1)^{|p'|}}
					|F(p',\xi,h',u)|^2 \dif \xi \dif h' \dif u \\
			=&\; \frac{c_\lambda}{2^{n+1}%
				V\Big(
				2\xi, -\frac{|p'|+n+\lambda-1}{2}
				\Big)} \times \\
			&\times \sum_{p' \in \Z^{n-1}} 
				\bigg(
					\int_{\R^{n-1}_+}	\frac{\big(h'\big)^{p'}}%
					{(1 + \|h'\|_1)^{|p'|}} \dif h'
				\bigg) \times \\
			&\times
				\bigg(
					\int_\R |\psi(p',\xi)|^2
					\Big(
						\int_\R 
							\frac{e^{-2\xi\arccot(u)}}%
							{(1 + u^2)^\frac{|p'|+n+\lambda+1}%
								{2}} \dif u
					\Big)\dif \xi
				\bigg)   \\
			=&\; \frac{c_\lambda}{2^{n+1}} 
			\sum_{p' \in \Z^{n-1}} 
				\bigg(
					\int_{\R^{n-1}_+}	\frac{\big(h'\big)^{p'}}%
						{(1 + \|h'\|_1)^{|p'|}} \dif h'
				\bigg) 
				\bigg(
					\int_\R |\psi(p',\xi)|^2 \dif \xi
				\bigg),
	\end{align*}
	where we have used the remarks found above for the functions $F$ and $V$. As in the proof of Theorem~\ref{thm:Segal-Bargmann-QE}, we conclude that the previous expression is finite precisely when $p' \in \Z^{n-1}_+$, and in that case the value is given by
	\[
		\|U_2\varphi\|^2 
		= \frac{c_\lambda}{2^{n+1}}
			\sum_{p' \in \Z^{n-1}_+} 
			\frac{p'! \Gamma(n + \lambda)}{\Gamma(|p'| + n + \lambda)}
			\int_\R |\psi(p',\xi)|^2 \dif \xi.
	\]
	After some algebraic manipulations, this clearly implies that $W_\lambda$ is indeed an isometry with image $(U_2 \circ U_1 \circ U_0)(\Aa^2_\lambda(D_n))$. Thus, we have proved (1) from our statement. The formula for $W_\lambda^*$ in (2) is obtained easily from the identity $\langle W_\lambda^* \varphi, \psi \rangle = \langle \varphi, W_\lambda \psi \rangle$ which holds for $\varphi, \psi$ in the corresponding spaces.	
	
	The last claim follows from the properties of partial isometries.
\end{proof}

\section{Toeplitz operators with moment map symbols}
\label{sec:Toeplitz-momentmapsymbols}
From the notation introduced in section~\ref{sec:analysisgeometry}, let us recall that $(G,D)$ denotes either of the pairs $(\T^n,\B^n)$ or $(\T^{n-1}\times\R_+, D_n)$ with the associated quasi-elliptic and quasi-hyperbolic actions, respectively. Following \cite{QSJFA} we have the next definition.

\begin{definition}\label{def:momentmap-symbol}
	For $(G, D)$ as above and $\mu = \mu^G$ a moment map for the $G$-action on $D$, a symbol $a \in L^\infty(D)$ is called a moment map symbol for such action if there is a measurable function $f$ defined on the image of $\mu$ such that $a = f \circ \mu$. The collection of all moment map symbols is denoted by $L^\infty(D)^{\mu}$. A given $a \in L^\infty(D)$ will be called quasi-elliptic moment map symbol or quasi-hyperbolic moment map symbol according to whether either of the pairs $(\T^n,\B^n)$ or $(\T^{n-1}\times\R_+, D_n)$, respectively, is under consideration.
\end{definition}

The next result is a particular case of properties for MASG's proved in \cite{QSJFA}.

\begin{proposition}\label{prop:momentmap-vs-invariant}
	For $(G,D)$ as above a symbol $a \in L^\infty(D)$ is a moment map symbol if and only if it is $G$-invariant, in other words when for every $g \in G$ one has
	\[
		a(g\cdot z) = a(z)
	\]
	for almost every $z \in D$.
\end{proposition}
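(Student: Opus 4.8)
The plan is to establish the two implications separately, using the group-moment coordinates constructed in Section~\ref{sec:actionmomentcoord}. The forward implication is immediate from the defining property of a moment map: $\mu$ is by definition a $G$-invariant function, so if $a = f\circ\mu$ then $a(g\cdot z) = f(\mu(g\cdot z)) = f(\mu(z)) = a(z)$ for every $g\in G$ and every $z$ in the (conull) domain where $a$ is defined. Hence every moment map symbol is $G$-invariant.

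For the converse, suppose $a\in L^\infty(D)$ is $G$-invariant, and transport the question to the group-moment coordinates of subsection~\ref{subsec:quasi-elliptic-coordinates} or subsection~\ref{subsec:quasi-hyperbolic-coordinates}, depending on the case. Writing the coordinate domain as $G\times P$, where $P=\R^n_+$ for the quasi-elliptic case and $P=\R^{n-1}_+\times\R$ for the quasi-hyperbolic one, the relation $\kappa(g',h)=g'\cdot\sigma(h)$ shows at once that the $G$-action reads $g\cdot(g',h)=(gg',h)$ in these coordinates; that is, it is left translation on the group factor and fixes $h$. Moreover, the measures $\nu_\lambda$ and $\widehat{\nu}_\lambda$ computed in Section~\ref{sec:actionmomentcoord} are, in both cases, products of a measure on $G$ with a measure on $P$, where the $G$-factor measure has the same null sets as Haar measure on $G$. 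Consequently, the $G$-invariance of $a$ is equivalent to the invariance of $a\circ\kappa$ under left translation in the $G$-variable.

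The decisive step is to deduce from this that $a\circ\kappa$ is, up to a null set, independent of the group variable, so that $a\circ\kappa(g',h)=\Phi(h)$ for some measurable function $\Phi$ on $P$. Granting this, I would use that $h=H(z)$, together with the fact that the reparametrized moment map $H$ differs from $\mu$ by a linear isomorphism $L$ of $\R^n$ (explicitly $H=L^{-1}\circ\mu$). Then $\Phi(H(z))=(\Phi\circ L^{-1})(\mu(z))$, so with $f=\Phi\circ L^{-1}$ we obtain $a=f\circ\mu$ on the conull set $\widehat{\B}^n$ (respectively $\widehat{D}_n$), and therefore almost everywhere on $D$. This exhibits $a$ as a moment map symbol and completes the argument.

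The main obstacle is the measure-theoretic content of the decisive step: passing from the hypothesis that for each fixed $g\in G$ one has $a\circ\kappa(gg',h)=a\circ\kappa(g',h)$ for almost every $(g',h)$, to the conclusion that $a\circ\kappa$ factors through the projection onto $P$ almost everywhere. The standard route is a Fubini argument reducing matters to the following fact: an $L^\infty$ function on a group $G$ that is almost everywhere invariant under every left translation is almost everywhere constant. In the quasi-elliptic case $G=\T^n$ is compact, so one may simply average against normalized Haar measure. The quasi-hyperbolic group carries the non-compact factor $\R_+$, so global averaging is unavailable; there one instead convolves with an approximate identity to produce a continuous, genuinely left-invariant representative, which must be constant since $G$ acts transitively on itself. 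The equivalence of the $G$-factor measure with Haar measure guarantees that these almost-everywhere statements are unambiguous.
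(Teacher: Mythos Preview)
The paper does not actually prove this proposition: it is stated without proof, with the remark just before it that ``the next result is a particular case of properties for MASG's proved in \cite{QSJFA}.'' So there is no argument in the paper to compare against; your proposal supplies a self-contained proof where the paper simply cites prior work.

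Your argument is correct. The forward direction is immediate from the $G$-invariance built into the definition of a moment map. For the converse you exploit the group-moment coordinates $\kappa : G\times P \to \widehat{D}$, in which the $G$-action becomes left translation on the first factor; you then reduce to the standard measure-theoretic fact that an $L^\infty$ function on $G\times P$ that is a.e.\ invariant under every left translation in $G$ is a.e.\ a function of the $P$-variable alone. Your treatment of this step is sound: the averaging/approximate-identity device handles both the compact case $G=\T^n$ and the noncompact case $G=\T^{n-1}\times\R_+$, since the $G$-factor of the measures $\nu_\lambda$, $\widehat{\nu}_\lambda$ is equivalent to Haar measure. Finally, writing $H=L\circ\mu$ for a linear isomorphism $L$ of $\R^n$ (in the two cases at hand, $L$ is diagonal) recovers $a=f\circ\mu$ with $f=\Phi\circ L$, first on the conull set $\widehat{D}$ and hence almost everywhere on $D$. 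One small cosmetic point: the complement $D\setminus\widehat{D}$ maps under $\mu$ into a set not covered by $L(P)$, but since it is $v_\lambda$-null this does not affect the conclusion for $L^\infty$ symbols.
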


\begin{remark}\label{rmk:momentmap-vs-invariant}
	The space of $G$-invariant essentially bounded measurable symbols on $D$ is denoted by $L^\infty(D)^G$. Hence, the previous result states that
	\[
	L^\infty(D)^{\mu^G} = L^\infty(D)^G,
	\]
	for the pairs $(G, D)$ introduced above. We will use this property in the rest of this work without further mention. We also observe that this property holds for arbitrary MASG in the case of the unit ball and the Siegel domain (see~\cite{QSJFA} for further details).
	
	From the definition of group-moment coordinates in section~\ref{sec:actionmomentcoord} given by the diffeomorphisms $\kappa$ and $\tau$, it follows that a symbol $a$ is a moment map symbol if and only if it only depends on the moment part of such coordinates. More precisely, we recall that the coordinate map is given by $\tau = (\rho, H)$ and, with this notation, $a$ is a moment map symbol if and only if there is a measurable function $f$, defined on the image of $H$, such that $a = f \circ H$. This is a consequence of the fact that $H$ was chosen as a linear reparametrization of the corresponding moment map. This fact and notation will also be used below without further mention.
\end{remark}

In what follows of this section we will obtain diagonalizing spectral integral formulas for Toeplitz operators with moment map symbols corresponding to the quasi-elliptic and quasi-hyperbolic cases.

\subsection{Quasi-elliptic moment map symbols}
\label{subsec:quasi-elliptic-symbols}
Let us consider moment map symbols for the quasi-elliptic $\T^n$-action on $\B^n$. Hence, in this subsection we will consider the notation and results from subsections~\ref{subsec:quasi-elliptic-coordinates} and \ref{subsec:quasi-elliptic-Bergman}. Recall that from Remark~\ref{rmk:momentmap-vs-invariant}, quasi-elliptic moment map symbols are precisely those of the form $a = f \circ H$, where $f$ is essentially bounded and measurable defined on $\R^n_+$.

\begin{theorem}\label{thm:quasi-elliptic-spectral}
	Let $a = f \circ H \in L^\infty(\B^n)$ be a quasi-elliptic moment map symbol on $\B^n$. Then, for every $\lambda > -1$, the Toeplitz operator $T^{(\lambda)}_a$ acting on $\Aa^2_\lambda(\B^n)$ is unitarily equivalent to a multiplier operator. More precisely, for $R_\lambda$ the operator considered in Theorem~\ref{thm:Segal-Bargmann-QE} we have
	\[
		R_\lambda \circ T^{(\lambda)}_a \circ R_\lambda^* 
			= M_{\gamma_{a,\lambda}},
	\]
	where $M_{\gamma_{a,\lambda}}$ is the multiplier operator acting on $\ell^2(\Z^n_+)$ and associated to the function $\gamma_{a,\lambda} : \Z^n_+ \rightarrow \C$ given by
	\[
		\gamma_{a,\lambda}(p) 
			= \frac{(n + \lambda + 1)_{|p|}}{p!}
				\int_{\R^n_+} \frac{h^p}%
					{(1 + \|h\|_1)^{|p|+n+\lambda+1}}
					f(h) \dif h,
	\]
	for every $p \in \Z^n_+$.
\end{theorem}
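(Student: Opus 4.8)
The plan is to conjugate the Toeplitz operator step by step through the maps $U_0$, $U_1$ and $W_\lambda$ appearing in the construction of $R_\lambda$, exploiting the fact that a moment map symbol becomes independent of the toral variable in group-moment coordinates. First I would record the two identities coming from the partial isometry structure of $R_\lambda = W_\lambda^* \circ U_1 \circ U_0$: since $U_1 \circ U_0$ is unitary and $W_\lambda$ is an isometry whose image is $(U_1\circ U_0)(\Aa^2_\lambda(\B^n))$, one has $R_\lambda R_\lambda^* = W_\lambda^* W_\lambda = I$ on $\ell^2(\Z^n_+)$ and $R_\lambda^* R_\lambda = B_{\B^n,\lambda}$, the Bergman projection. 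The immediate consequence is that $R_\lambda \circ B_{\B^n,\lambda} = R_\lambda R_\lambda^* R_\lambda = R_\lambda$, so that
\[
	R_\lambda \circ T^{(\lambda)}_a \circ R_\lambda^*
		= R_\lambda \circ B_{\B^n,\lambda}\circ M_a \circ R_\lambda^*
		= R_\lambda \circ M_a \circ R_\lambda^*.
\]
Thus the Bergman projection is absorbed and the problem reduces to computing the conjugate of the multiplication operator $M_a$.

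Next I would track $M_a$ through the unitary maps. Using $U_0 f = f\circ\kappa$ and $U_0^{-1}g = g\circ\tau$, for any $g$ one computes $(U_0 M_a U_0^{-1}g)(t,h) = a(\kappa(t,h))\,g(t,h)$; and since $a = f\circ H$ with $H(\kappa(t,h)) = h$ (because $\tau = (\rho,H)$ inverts $\kappa$), this is simply multiplication by the function $(t,h)\mapsto f(h)$. The crucial point is that this symbol no longer depends on $t$, so under the tensor decomposition $L^2(\T^n)\otimes L^2(\R^n_+,\widetilde{\nu}_\lambda)$ it equals $I\otimes M_f$. Consequently $U_1 = \Fa_n\otimes I$ commutes with it, and $U_1 U_0 M_a U_0^{-1} U_1^{-1}$ is the fiberwise multiplication operator $\varphi(p,h)\mapsto f(h)\varphi(p,h)$ on $\ell^2\big(\Z^n, L^2(\R^n_+,\widetilde{\nu}_\lambda)\big)$.

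It then remains to evaluate $W_\lambda^* \circ M_{f(h)} \circ W_\lambda$ using the explicit formulas from Theorem~\ref{thm:Segal-Bargmann-QE}. Applying $W_\lambda$ to $\psi$, multiplying by $f(h)$, and then applying $W_\lambda^*$, the two normalizing square-root factors combine into $\frac{(n+\lambda+1)_{|p|}}{p!}$, while the product of the two factors $h^{p/2}(1+\|h\|_1)^{-|p|/2}$ together with the weight in the formula for $W_\lambda^*$ produces the integrand $\frac{h^p}{(1+\|h\|_1)^{|p|+n+\lambda+1}}$. This yields exactly $(W_\lambda^* M_{f(h)} W_\lambda\psi)(p) = \gamma_{a,\lambda}(p)\,\psi(p)$, i.e.\ the multiplier $M_{\gamma_{a,\lambda}}$.

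The steps are mostly bookkeeping; the genuine conceptual content is the observation in the second paragraph that moment-map invariance of the symbol makes $a\circ\kappa$ depend only on $h$, so that the discrete Fourier transform $\Fa_n$ diagonalizes the toral directions and the whole operator decouples. The only point requiring a word of care is boundedness of the resulting multiplier: the integral defining $\gamma_{a,\lambda}(p)$ is a weighted average of $f$ against the measure $\frac{h^p}{(1+\|h\|_1)^{|p|+n+\lambda+1}}\dif h$, whose total mass is $\frac{p!\,\Gamma(n+\lambda+1)}{\Gamma(|p|+n+\lambda+1)} = \frac{p!}{(n+\lambda+1)_{|p|}}$ by the integral already evaluated in the proof of Theorem~\ref{thm:Segal-Bargmann-QE}; hence $|\gamma_{a,\lambda}(p)|\le \|f\|_\infty$ and $M_{\gamma_{a,\lambda}}$ is a bounded operator on $\ell^2(\Z^n_+)$, completing the argument.
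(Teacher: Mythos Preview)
Your proof is correct and follows essentially the same route as the paper's own argument: reduce to $R_\lambda M_a R_\lambda^*$ via the partial isometry identities, push $M_a$ through $U_0$ and $U_1$ using that $a\circ\kappa$ depends only on $h$, and then compute $W_\lambda^* M_f W_\lambda$ explicitly from the formulas of Theorem~\ref{thm:Segal-Bargmann-QE}. The only addition beyond the paper is your closing remark on the boundedness of the multiplier, which is a harmless extra observation.
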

\begin{proof}
	Recall from Theorem~\ref{thm:Segal-Bargmann-QE} that $R_\lambda = W_\lambda^* \circ U_1 \circ U_0$ where, in the rest of this proof, we follow the notation from subsection~\ref{subsec:quasi-elliptic-Bergman}. First note that the last claim of Theorem~\ref{thm:Segal-Bargmann-QE} implies that $R_\lambda^* R_\lambda = B_{\B^n,\lambda}$, the Bergman projection onto $\Aa^2_\lambda(\B^n)$, and that $R_\lambda R_\lambda^*$ is the identity operator acting on $\ell^2(\Z^n_+)$. Hence, we can compute
	\begin{align*}
		R_\lambda \circ T^{(\lambda)}_a \circ R_\lambda^*
			&= R_\lambda B_{\B^n,\lambda} 
						M_a B_{\B^n,\lambda} R_\lambda^* \\
			&= R_\lambda R_\lambda^* R_\lambda 
						M_a R_\lambda^* R_\lambda R_\lambda^* \\
			&= R_\lambda M_a R_\lambda^* \\
			&= W_\lambda^* U_1 U_0 M_a U_0^* U_1^* W_\lambda \\
			&= W_\lambda^* U_1 M_{a \circ \kappa} U_1^* W_\lambda,
	\end{align*}
	where the last line follows from the definition of $U_0$. From subsection~\ref{subsec:quasi-elliptic-coordinates}, the function $\tau$ is inverse of $\kappa$ and has its last $n$ components given by $H$. Hence, the property $a = f \circ H$ implies that
	\[
		a\circ \kappa(t,h) = f(h),
	\]
	for all $t \in \T^n$ and $h \in \R^n_+$. By its definition, $U_1$  commutes with $M_{a \circ \kappa} = M_f$ and so we arrive to
	\[
		R_\lambda \circ T^{(\lambda)}_a \circ R_\lambda^*
			= W_\lambda^* \circ M_f \circ W_\lambda.
	\]
	The formulas from Theorem~\ref{thm:Segal-Bargmann-QE} imply that the last operator satisfies, for every $\psi \in \ell^2(\Z^n_+)$, the following
	\[
		(W_\lambda^* \circ M_f \circ W_\lambda)(\psi)(p)
			= \frac{(n + \lambda + 1)_{|p|}}{p!}
				\Bigg(\int_{\R^n_+} \frac{h^p f(h) \dif h}%
					{(1 + \|h\|_1)^{|p|+n+\lambda+1}}\Bigg) \psi(p),
	\]
	for every $p \in \Z^n_+$. This yields the required conclusion.
\end{proof}

\begin{remark}\label{rmk:quasi-elliptic-spectral}
	From the integral identities considered in the proof of Theorem~\ref{thm:Segal-Bargmann-QE} we can rewrite $\gamma_{a,\lambda}$ from Theorem~\ref{thm:quasi-elliptic-spectral} as 
	\begin{equation}\label{eq:quasi-elliptic-spectral}
		\gamma_{a,\lambda}(p) 
		= \frac{\displaystyle\int_{\R^n_+} \frac{h^p}%
			{(1 + \|h\|_1)^{|p|+n+\lambda+1}} f(h) \dif h}%
			{\displaystyle\int_{\R^n_+} \frac{h^p}%
				{(1 + \|h\|_1)^{|p|+n+\lambda+1}}  \dif h}
	\end{equation}
	for every $p \in \Z^n_+$. This highlights the fact that $\gamma_{a,\lambda}$ is a sort of normalized weighted special integral with weight $f$. Of course, the function $f$ comes from the symbol $a$ through the identity $a = f \circ H$.
\end{remark}

\subsection{Quasi-hyperbolic moment map symbols}
\label{subsec:quasi-hyperbolic-symbols}
We now consider moment map symbols for the quasi-hyperbolic $\T^{n-1} \times \R_+$-action on $D_n$. In this case, we consider the notation and results from subsections~\ref{subsec:quasi-hyperbolic-coordinates} and \ref{subsec:quasi-hyperbolic-Bergman}. Again, Remark~\ref{rmk:momentmap-vs-invariant} implies that the quasi-hyperbolic moment map symbols are precisely those of the form $a = f \circ H$, where $f$ is an essentially bounded and measurable function defined on $\R^{n-1}_+ \times \R$. Furthermore, in this case it will be useful to consider the reparametrization of $H$ given by the following function
\begin{align}\label{eq:H-reparam}
	\widetilde{H} : D_n &\longrightarrow \R_+^{n-1} \times \R \\
	\widetilde{H}(z) &= (\widetilde{H}_0(z), \widetilde{H}_n(z)), \notag
\end{align}
where we denote
\begin{align}
	\widetilde{H}_0(z) &= \bigg(
			\frac{|z_1|^2}{\im(z_n) - |z'|^2}, \dots,
			\frac{|z_{n-1}|^2}{\im(z_n) - |z'|^2} \bigg)    
			\label{eq:widetildeH0}   \\
	\widetilde{H}_n(z) &= \frac{\re(z_n)}{\im(z_n)},
			\notag
\end{align}
for every $z \in D_n$. In particular, $H$ and $\widetilde{H}$ have the same first $n-1$ components. Moreover, we have
\begin{equation}\label{eq:widetildeHn-formula}
	\widetilde{H}_n(z) = \frac{H_n(z)}{1 + \|\widetilde{H}_0(z)\|_1}
		= \frac{\re(z_n)}{\im(z_n)},
\end{equation}
for every $z \in D_n$. These remarks and Proposition~\ref{prop:momentmap-vs-invariant} imply the next result.

\begin{lemma}\label{lem:H-vs-widetildeH-quasi-hyperbolic}
	A symbol $a \in L^\infty(D_n)$ is a quasi-hyperbolic moment map symbol if and only if any of the following equivalent conditions is satisfied.
	\begin{enumerate}
		\item $a$ is $\T^{n-1} \times \R_+$-invariant.
		\item $a = f \circ H$ for some measurable function $f$.
		\item $a = f \circ \widetilde{H}$ for some measurable function $f$.
	\end{enumerate}
\end{lemma}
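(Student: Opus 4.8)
The goal is to prove the equivalence of three conditions characterizing quasi-hyperbolic moment map symbols. The plan is to establish the equivalences by combining the already-proved Proposition~\ref{prop:momentmap-vs-invariant} with elementary observations about the two parametrizations $H$ and $\widetilde{H}$. The equivalence of (1) and (2) is essentially immediate from the results stated earlier, so the real content lies in connecting these to condition (3), which uses the reparametrized map $\widetilde{H}$.

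**Structure of the argument.**

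First I would dispose of the equivalence $(1) \Leftrightarrow (2)$. By Proposition~\ref{prop:momentmap-vs-invariant}, a symbol $a \in L^\infty(D_n)$ is a moment map symbol (for the quasi-hyperbolic action) if and only if it is $\T^{n-1} \times \R_+$-invariant. But being a moment map symbol means, by Definition~\ref{def:momentmap-symbol} together with Remark~\ref{rmk:momentmap-vs-invariant}, precisely that $a = f \circ H$ for some measurable $f$, since $H$ is a linear reparametrization of the moment map. This gives $(1) \Leftrightarrow (2)$ directly.

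**Connecting to the third condition.**

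The remaining task is to prove $(2) \Leftrightarrow (3)$, and the key is to show that $H$ and $\widetilde{H}$ generate the same fibers, i.e.\ that one factors through the other via a measurable bijection. From the definitions, $H$ and $\widetilde{H}$ share the same first $n-1$ components $\widetilde{H}_0$, and the formula~\eqref{eq:widetildeHn-formula} expresses $\widetilde{H}_n$ in terms of $H_n$ and $\widetilde{H}_0 = (H_1,\dots,H_{n-1})$ via
\[
	\widetilde{H}_n(z) = \frac{H_n(z)}{1 + \|\widetilde{H}_0(z)\|_1}.
\]
Thus there is a measurable map $\Phi : \R_+^{n-1} \times \R \to \R_+^{n-1} \times \R$, namely
\[
	\Phi(h',h_n) = \Big(h', \frac{h_n}{1 + \|h'\|_1}\Big),
\]
such that $\widetilde{H} = \Phi \circ H$. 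Since $1 + \|h'\|_1 > 0$ everywhere on $\R_+^{n-1}$, this map $\Phi$ is a measurable bijection with measurable inverse $(h',u) \mapsto (h', (1 + \|h'\|_1)u)$. Consequently, for any measurable $f$, writing $a = f \circ \widetilde{H} = (f \circ \Phi) \circ H$ exhibits $a$ in the form of condition (2), and conversely $a = g \circ H = (g \circ \Phi^{-1}) \circ \widetilde{H}$ exhibits it in the form of condition (3). This yields $(2) \Leftrightarrow (3)$ and completes the chain of equivalences.

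**Where the subtlety lies.**

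The argument is essentially a bookkeeping exercise, and I do not expect a genuine obstacle; the one point deserving care is the domain of definition. The maps $H$ and $\widetilde{H}$ are properly defined on the conull dense subset $\widehat{D}_n$ (where the toral components make sense), so the factorization $\widetilde{H} = \Phi \circ H$ should be read as holding on $\widehat{D}_n$, and all identifications of symbols are understood up to almost-everywhere equality, consistent with the $L^\infty$ setting already used in Proposition~\ref{prop:momentmap-vs-invariant}. Since $\widehat{D}_n$ differs from $D_n$ by a null set, this causes no difficulty for the equivalence of conditions stated for elements of $L^\infty(D_n)$.
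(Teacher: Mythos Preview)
Your proposal is correct and follows essentially the same approach as the paper, which simply states that the lemma follows from Proposition~\ref{prop:momentmap-vs-invariant} together with the preceding remarks about $H$, $\widetilde{H}$, and the identity~\eqref{eq:widetildeHn-formula}. Your explicit construction of the bijection $\Phi$ merely spells out what the paper leaves implicit; the only minor comment is that both $H$ and $\widetilde{H}$ are in fact well defined on all of $D_n$ (not just $\widehat{D}_n$), so your caveat about domains, while harmless, is unnecessary.
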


Hence, the next result considers the most general sort of quasi-hyperbolic moment map symbol.

\begin{theorem}\label{thm:quasi-hyperbolic-spectral}
	Let $a = f \circ \widetilde{H} \in L^\infty(D_n)$ be a quasi-hyperbolic moment map symbol on $D_n$, where $f : \R_+^{n-1} \times \R \rightarrow \C$ is an essentially bounded measurable function. Then, for every $\lambda > -1$, the Toeplitz operator $T^{(\lambda)}_a$ acting on $\Aa^2_\lambda(D_n)$ is unitarily equivalent to a multiplier operator. More precisely, for $R_\lambda$ the operator considered in Theorem~\ref{thm:Segal-Bargmann-QH} we have
	\[
		R_\lambda \circ T^{(\lambda)}_a \circ R_\lambda^*
			= M_{\gamma_{a,\lambda}},
	\]
	where $M_{\gamma_{a,\lambda}}$ is the multiplier operator acting on $\ell^2\big(\Z^{n-1}_+,L^2(\R)\big)$ associated to the function $\gamma_{a,\lambda} : \Z^{n-1}_+ \times \R \rightarrow \C$ given by
	\begin{align*}
		\gamma_{a,\lambda}&(p',\xi) = 
			\frac{(n + \lambda)_{|p'|}}%
				{p'! V\big(2\xi, -\frac{|p'|+n+\lambda-1}{2}	\big)} \times \\
			&\times 
				\int_{\R^{n-1}_+ \times \R}
					\frac{\big(h'\big)^{p'}}%
						{(1 + \|h'\|_1)^{|p'|+n+\lambda}}
					\frac{e^{-2\xi \arccot(u)}}%
						{(1 + u^2)^{\frac{|p'| + n + \lambda + 1}{2}}}
							f(h',u) \dif h' \dif u,
	\end{align*}
	for every $p' \in \Z^{n-1}_+$ and $\xi \in \R$.
\end{theorem}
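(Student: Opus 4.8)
The plan is to follow the same strategy as in the proof of Theorem~\ref{thm:quasi-elliptic-spectral}, adapted to the extra transform $U_2$ present in the quasi-hyperbolic setting. Recall from Theorem~\ref{thm:Segal-Bargmann-QH} that $R_\lambda = W_\lambda^* \circ U_2 \circ U_1 \circ U_0$ is a partial isometry with initial space $\Aa^2_\lambda(D_n)$; in particular $R_\lambda^* R_\lambda = B_{D_n,\lambda}$ is the Bergman projection and $R_\lambda R_\lambda^*$ is the identity on $\ell^2\big(\Z^{n-1}_+,L^2(\R)\big)$. Writing $T^{(\lambda)}_a = B_{D_n,\lambda} M_a B_{D_n,\lambda}$ and inserting these identities, the same cancellation as in the quasi-elliptic case collapses the conjugation to $R_\lambda \circ T^{(\lambda)}_a \circ R_\lambda^* = R_\lambda M_a R_\lambda^* = W_\lambda^* U_2 U_1 U_0 M_a U_0^* U_1^* U_2^* W_\lambda$. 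Using the definition of $U_0$, I would then push the multiplier through $U_0$ via $U_0 M_a U_0^* = M_{a \circ \kappa}$.

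The crux of the argument is the explicit form of $a\circ\kappa$. For $z = \kappa(t',s,h)$ a direct computation gives $\im(z_n) - |z'|^2 = s$, $|z_j|^2 = s h_j$ for $j \le n-1$, $\re(z_n) = s h_n$ and $\im(z_n) = s(1 + \|h'\|_1)$. Substituting into the reparametrized moment map \eqref{eq:H-reparam} yields $\widetilde{H}_0(\kappa(t',s,h)) = h'$ and $\widetilde{H}_n(\kappa(t',s,h)) = h_n/(1 + \|h'\|_1)$, so that $a\circ\kappa(t',s,h) = f\big(h', h_n/(1 + \|h'\|_1)\big)$, which is independent of the group variables $t'$ and $s$. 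This is precisely the reason for stating the theorem through the reparametrization $\widetilde{H}$ (admissible by Lemma~\ref{lem:H-vs-widetildeH-quasi-hyperbolic}): the combination $h_n/(1 + \|h'\|_1)$ is exactly the variable $u$ introduced by $U_2$.

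With this in hand, I would carry out two commutation steps. Since $M_{a\circ\kappa}$ depends only on $(h',h_n)$ and not on $t'$ or $s$, it commutes with $U_1 = \Fa_{n-1} \otimes \Ma \otimes I$, which acts only on those group variables. Conjugating by $U_2$ next, the defining substitution $h_n = (1 + \|h'\|_1)u$ in the formula for $U_2$ turns the symbol $f\big(h', h_n/(1 + \|h'\|_1)\big)$ into $f(h',u)$, so that $U_2 U_1 M_{a\circ\kappa} U_1^* U_2^* = M_{f(h',u)}$, a multiplier in the variables $(h',u)$. Therefore $R_\lambda \circ T^{(\lambda)}_a \circ R_\lambda^* = W_\lambda^* \circ M_{f(h',u)} \circ W_\lambda$. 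Because $M_{f(h',u)}$ commutes with multiplication by any function of $(p',\xi)$, this composition is itself a multiplier on $\ell^2\big(\Z^{n-1}_+,L^2(\R)\big)$; inserting the explicit formulas for $W_\lambda$ and $W_\lambda^*$ from Theorem~\ref{thm:Segal-Bargmann-QH}, the two normalization radicals multiply to $(n+\lambda)_{|p'|}/\big(p'!\,V(2\xi,-\tfrac{|p'|+n+\lambda-1}{2})\big)$ and the $h'$- and $u$-weights combine, producing exactly $\gamma_{a,\lambda}(p',\xi)$ as claimed.

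The main obstacle, and the only genuinely new point beyond the quasi-elliptic proof, is getting the interaction with $U_2$ right: verifying the identity $\widetilde{H}\circ\kappa = \big(h', h_n/(1+\|h'\|_1)\big)$ and checking that conjugation by $U_2$ converts the $h_n$-multiplier into a clean $u$-multiplier. Once these are established, the remaining normalization bookkeeping is routine and parallels Theorem~\ref{thm:quasi-elliptic-spectral}.
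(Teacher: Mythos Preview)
Your proposal is correct and follows essentially the same route as the paper's proof: the same reduction $R_\lambda T^{(\lambda)}_a R_\lambda^* = W_\lambda^* U_2 U_1 M_{a\circ\kappa} U_1^* U_2^* W_\lambda$, the same computation $a\circ\kappa(t',s,h)=f\big(h',h_n/(1+\|h'\|_1)\big)$, commutation with $U_1$, the identity $U_2 M_{a\circ\kappa} U_2^* = M_f$, and finally reading off $\gamma_{a,\lambda}$ from the explicit $W_\lambda, W_\lambda^*$. Your identification of the $U_2$ step as the only new ingredient beyond the quasi-elliptic case matches the paper's emphasis exactly.
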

\begin{proof}
	We follow the notation and results from subsection~\ref{subsec:quasi-hyperbolic-Bergman} which, in this case, yields the operator $R_\lambda = W_\lambda^* \circ U_2 \circ U_1 \circ U_0$. Furthermore, Theorem~\ref{thm:Segal-Bargmann-QH} implies that we have $R_\lambda^* R_\lambda = B_{D_n,\lambda}$, the Bergman projection onto $\Aa^2_\lambda(D_n)$, and $R_\lambda R_\lambda^*$ is the identity operator acting on $\ell^2\big(\Z^{n-1}_+,L^2(\R)\big)$. Hence, we conclude that
	\begin{align*}
		R_\lambda \circ T^{(\lambda)}_a \circ R_\lambda^*
			&= R_\lambda B_{\B^n,\lambda} 
				M_a B_{\B^n,\lambda} R_\lambda^* \\
			&= R_\lambda R_\lambda^* R_\lambda 
				M_a R_\lambda^* R_\lambda R_\lambda^* \\
			&= R_\lambda M_a R_\lambda^* \\
			&= W_\lambda^* U_2 U_1 U_0 M_a U_0^* U_1^* U_2^* W_\lambda \\
			&= W_\lambda^* U_2 U_1 M_{a \circ \kappa} U_1^* U_2^* W_\lambda,
	\end{align*}
	where the last identity follows from the definition of $U_0$. On the other hand, $\kappa$ is the inverse of $\tau$ whose last $n$ components are given by $H$, and this together with \eqref{eq:widetildeH0} and \eqref{eq:widetildeHn-formula} yield
	\[
		a \circ \kappa(t',s,h) = f\bigg(h', 
				\frac{h_n}{1 + \|h'\|_1}\bigg),
	\]
	for every $t' \in \T^{n-1}$, $s \in \R_+$ and $h \in \R^{n-1}_+ \times \R$. Since this last function depends only on the variable $h$, the definition of $U_1$ implies that $M_{a\circ \kappa} = M_f$ commutes with $U_1$. Next, we observe that the operator $U_2$ involves a weighted change of variable and a straightforward computations shows that
	\[
		U_2 M_{a \circ \kappa} U_2^* = M_f.
	\]
	And so, we arrive to the identity
	\[
		R_\lambda \circ T^{(\lambda)}_a \circ R_\lambda^*
			= W_\lambda^* \circ M_f \circ W_\lambda.
	\]
	If we apply the formulas from Theorem~\ref{thm:Segal-Bargmann-QH}, then we obtain for every function $\psi \in \ell^2\big(\Z^{n-1}_+,L^2(\R)\big)$ that
	\begin{multline*}
		(W_\lambda^* \circ M_f \circ W_\lambda \psi)(p',\xi)
			= \frac{(n + \lambda)_{|p'|}}%
				{p'! V\big(2\xi, -\frac{|p'|+n+\lambda-1}{2}	
					\big)} \times \\
			\times 
				\Bigg(
					\int_{\R^{n-1}_+ \times \R}
						\frac{\big(h'\big)^{p'} f(h',u)}%
						{(1 + \|h'\|_1)^{|p'|+n+\lambda}}
						\frac{e^{-2\xi \arccot(u)}}%
						{(1 + u^2)^{\frac{|p'| + n + \lambda + 1}{2}}}
						 	\dif h' \dif u
				\Bigg) \psi(p',\xi),
	\end{multline*}
	for every $p' \in \Z^{n-1}_+$ and $\xi \in \R$. This finishes the proof.
\end{proof}

\begin{remark}\label{rmk:quasi-hyperbolic-spectral}
	Similar to the quasi-elliptic case, the function $\gamma_{a,\lambda}$ from Theorem~\ref{thm:quasi-hyperbolic-spectral} can be rewritten as
	\begin{multline}\label{eq:quasi-hyperbolic-spectral}
		\gamma_{a,\lambda}(p',\xi) = \\
			= \frac{\displaystyle \int_{\R^{n-1}_+ \times \R}
					\frac{\big(h'\big)^{p'}}%
					{(1 + \|h'\|_1)^{|p'|+n+\lambda}}
					\frac{e^{-2\xi \arccot(u)}}%
					{(1 + u^2)^{\frac{|p'| + n + \lambda + 1}{2}}}
					f(h',u) \dif h' \dif u}%
				{\displaystyle \int_{\R^{n-1}_+ \times \R}
					\frac{\big(h'\big)^{p'}}%
					{(1 + \|h'\|_1)^{|p'|+n+\lambda}}
					\frac{e^{-2\xi \arccot(u)}}%
					{(1 + u^2)^{\frac{|p'| + n + \lambda + 1}{2}}}
					\dif h' \dif u}, 
	\end{multline}
	for every $p' \in \Z^{n-1}_+$ and $\xi \in \R$. Hence, we obtain again a sort of normalized weighted special integral where the weight $f$ comes from the symbol $a$. It is worthwhile to compare the formula of this remark with that from Remark~\ref{rmk:quasi-elliptic-spectral}. It is readily observed that equation~\eqref{eq:quasi-hyperbolic-spectral} above extends equation~\eqref{eq:quasi-elliptic-spectral} of the quasi-elliptic case, in dimension $n-1$, by adding a further integral that involves the weight used to define the Romanovski-Routh polynomials.
	
	The last observation allows us to obtain spectral integral formulas for other symbols, among the quasi-hyperbolic ones, that have additional conditions.
\end{remark}

We note that the computations and results obtained so far hold for any dimension $n$. For the quasi-elliptic case, if we assume that $n = 1$, there is no essential change besides corresponding formulas simplified to integration over a single variable. However, for the quasi-hyperbolic case, the assumption $n=1$ does bring an important difference: there is no longer a toral action. More precisely, let us recall that the quasi-hyperbolic action on $D_n$, for $n \geq 1$, is given by
\begin{align*}
	\T^{n-1} \times \R_+ \times D_n &\longrightarrow D_n \\
		(t',s)\cdot z &= (s^\frac{1}{2} t'z', sz_n),
\end{align*}
which reduces to the so-called hyperbolic action for $n=1$ given by
\begin{align*}
	\R_+ \times \HH &\longrightarrow \HH  \\
		s\cdot z &= sz,
\end{align*}
where $\HH = D_1$ is the complex upper half-plane. With this notation at hand, a hyperbolic symbol on $\HH$ is, by definition, a function $a \in L^\infty(\HH)$ which is $\R_+$-invariant. This is equivalent to the existence of a measurable function $f : \R \rightarrow \C$ for which $a$ satisfies
\begin{equation}\label{eq:hyperbolic-symbol}
	a(z) = f\bigg(\frac{\re(z)}{\im(z)}\bigg),
\end{equation}
for almost every $z \in \HH$. The next result is a consequence of Theorem~\ref{thm:quasi-hyperbolic-spectral}, obtained by taking $n=1$, and provides alternative expressions for the known spectral integral formulas for the Toeplitz operators with hyperbolic symbols (see \cite{GKVHyperbolic,GQVJFA}).

\begin{corollary}\label{cor:hyperbolic-n=1}
	Let $a \in L^\infty(\HH)$ be a hyperbolic symbol and let $f : \R \rightarrow \C$ be the measurable function for which \eqref{eq:hyperbolic-symbol} holds. Then, for every $\lambda > -1$, the Toeplitz operator $T^{(\lambda)}_a$ acting on $\Aa^2_\lambda(\HH)$ satisfies
	\[
		R_\lambda \circ T^{(\lambda)}_a \circ R_\lambda^*
			= M_{\gamma_{a,\lambda}},
	\]
	where $R_\lambda$ is given by Theorem~\ref{thm:Segal-Bargmann-QH} and $M_{\gamma_{a,\lambda}}$ is the multiplier operator acting on $L^2(\R)$ with function $\gamma_{a,\lambda} : \R \rightarrow \C$ given by
	\begin{align*}
		\gamma_{a,\lambda}(\xi) 
			&= \frac{1}%
					{V\big(2\xi, -\frac{\lambda}{2}\big)} 
				\int_\R
					\frac{e^{-2\xi \arccot(u)}}%
						{(1 + u^2)^{\frac{\lambda}{2}+1}}
							f(u) \dif u  \\
			&= \frac{\displaystyle\int_\R
					\frac{e^{-2\xi \arccot(u)}}%
							{(1 + u^2)^{\frac{\lambda}{2}+1}}
								f(u) \dif u}%
					{\displaystyle\int_\R
						\frac{e^{-2\xi \arccot(u)}}%
						{(1 + u^2)^{\frac{\lambda}{2}+1}}\dif u} 
	\end{align*}
	for every $\xi \in \R$.
\end{corollary}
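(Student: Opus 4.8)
The plan is to obtain Corollary~\ref{cor:hyperbolic-n=1} as the direct specialization of Theorem~\ref{thm:quasi-hyperbolic-spectral} to the case $n=1$, checking carefully which pieces of the general formula survive and which degenerate. First I would observe that for $n=1$ the Siegel domain $D_1$ is exactly the upper half-plane $\HH$, the toral factor $\T^{n-1} = \T^0$ is trivial, and the quasi-hyperbolic action $(t',s)\cdot z = (s^{1/2}t'z', sz_n)$ collapses to the dilation action $s\cdot z = sz$ on $\HH$. Consequently the index set $\Z^{n-1}_+$ reduces to the single point $\Z^0_+ = \{0\}$, so $p' = 0$ and $|p'| = 0$, and the target Hilbert space $\ell^2\big(\Z^{n-1}_+, L^2(\R)\big)$ becomes simply $L^2(\R)$. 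This is the conceptual heart of the matter and the only genuinely new observation: that the general machinery applies verbatim but with a trivial group factor.

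Next I would substitute $n=1$, $p'=0$ and $|p'|=0$ directly into the multiplier function $\gamma_{a,\lambda}$ from Theorem~\ref{thm:quasi-hyperbolic-spectral}. In that formula the Pochhammer prefactor $(n+\lambda)_{|p'|}/p'!$ becomes $(1+\lambda)_0/0! = 1$, the factors involving $(h')^{p'}$ and $(1+\|h'\|_1)$ disappear since there is no $h'$ variable left and the integration over $\R^{n-1}_+ = \R^0$ is vacuous, and the exponent $(|p'|+n+\lambda+1)/2$ reduces to $(\lambda+2)/2 = \lambda/2 + 1$. The remaining normalizing constant is $V\big(2\xi, -\tfrac{|p'|+n+\lambda-1}{2}\big) = V\big(2\xi, -\tfrac{\lambda}{2}\big)$. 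I would also verify that the reparametrized moment coordinate $\widetilde{H}$ from \eqref{eq:H-reparam} reduces to $\widetilde{H}(z) = \re(z)/\im(z)$, matching the symbol description \eqref{eq:hyperbolic-symbol} and confirming that $f$ here is a function of the single variable $u$. Carrying out these substitutions yields the first displayed expression for $\gamma_{a,\lambda}(\xi)$ in the statement.

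Finally I would derive the second, quotient form of $\gamma_{a,\lambda}$ by the same normalization argument used in Remark~\ref{rmk:quasi-hyperbolic-spectral}: the prefactor $1/V\big(2\xi, -\tfrac{\lambda}{2}\big)$ is precisely the reciprocal of the integral $\int_\R e^{-2\xi\arccot(u)}(1+u^2)^{-(\lambda/2+1)}\dif u$, which follows from the definition of $V(\alpha,\beta)$ with $\alpha = 2\xi$ and $\beta = -\lambda/2$ (so that $1-\beta = \lambda/2+1$). Rewriting the constant as that denominator then gives the normalized-weighted-integral form.

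I do not expect any serious obstacle here: the result is a clean specialization rather than an independent argument, and the main task is bookkeeping to confirm that every degenerate factor (the trivial group $\T^0$, the empty index $p'$, the vacuous $h'$-integration) collapses consistently. The one point requiring genuine care is confirming that the normalization measure $V\big(2\xi, -\tfrac{\lambda}{2}\big)$ is finite for the relevant parameters, which holds by the finiteness criterion $\beta<0$ noted after \eqref{eq:Romanovski-Routh}, since $\beta = -\lambda/2 < 0$ for $\lambda > 0$ and, more generally, one checks directly that the integrand is integrable at infinity for all $\lambda > -1$.
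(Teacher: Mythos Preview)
Your proposal is correct and follows exactly the paper's own approach: the corollary is stated there as an immediate consequence of Theorem~\ref{thm:quasi-hyperbolic-spectral} obtained by setting $n=1$, and your bookkeeping of how each factor degenerates (trivial $\T^0$, vacuous $h'$-integral, collapse of the Pochhammer prefactor, reduction of the exponent and of the $V$-parameters) is precisely what that specialization amounts to. The only slightly delicate remark is the finiteness of $V\big(2\xi,-\tfrac{\lambda}{2}\big)$ for $-1<\lambda\le 0$, which you correctly resolve by direct inspection of the integrand's decay at infinity rather than invoking the cruder criterion $\beta<0$.
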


If we now consider in Theorem~\ref{thm:quasi-hyperbolic-spectral}  symbols that depend on $\widetilde{H}_0$ only, as defined in \eqref{eq:widetildeH0}, then we arrive at the next result. Again, it is interesting to look at Remark~\ref{rmk:quasi-hyperbolic-spectral}. We will denote by $\Ta^{(\lambda)}(L^\infty(D)^G)$ the $C^*$-algebra generated by Toeplitz operators with symbols in $L^\infty(D)^G$ ($G$-invariant ones), where $(G,D)$ denotes either of the pairs $(\T^n, \B^n)$ or $(\T^{n-1} \times \R_+, D_n)$. Note that the last claim in the next result follows from a comparison with Theorem~\ref{thm:quasi-elliptic-spectral} and Remark~\ref{rmk:quasi-elliptic-spectral}.

\begin{corollary}\label{cor:quasi-hyperbolic-H_0}
	Let $a \in L^\infty(D_n)$ be a quasi-hyperbolic moment map symbol that satisfies $a = f \circ \widetilde{H}_0$ where $\widetilde{H}_0$ is given by \eqref{eq:widetildeH0} and $f : \R_+^{n-1} \rightarrow \C$ is an essentially bounded measurable function. Then, for every $\lambda > -1$, the Toeplitz operator $T^{(\lambda)}_a$ acting on $\Aa^2_\lambda(D_n)$ satisfies
	\[
		R_\lambda \circ T^{(\lambda)}_a \circ R_\lambda^*
			= M_{\gamma_{a,\lambda}},
	\]
	where $R_\lambda$ is given by Theorem~\ref{thm:Segal-Bargmann-QH} and $M_{\gamma_{a,\lambda}}$ is the multiplier operator acting on $\ell^2\big(\Z^{n-1}_+,L^2(\R)\big)$ with function $\gamma_{a,\lambda} : \Z_+^{n-1} \times \R \rightarrow \C$ given by
	\[
		\gamma_{a,\lambda}(p',\xi) 
		= \frac{\displaystyle \int_{\R^{n-1}_+}
			\frac{\big(h'\big)^{p'}}%
			{(1 + \|h'\|_1)^{|p'|+n+\lambda}}
				f(h') \dif h'}%
		{\displaystyle \int_{\R^{n-1}_+}
			\frac{\big(h'\big)^{p'}}%
			{(1 + \|h'\|_1)^{|p'|+n+\lambda}}
				\dif h'}, 
	\]
	for every $p' \in \Z_+^{n-1}$ and $\xi \in \R$. In particular, the function $\gamma_{a,\lambda}$ is independent of the variable $\xi \in \R$. Furthermore, it follows that, for every $n \geq 2$, the $C^*$-algebra $\Ta^{(\lambda)}(L^\infty(D_n)^{\T^{n-1} \times \R_+})$ contains a $C^*$-subalgebra isomorphic to $\Ta^{(\lambda)}(L^\infty(\B^{n-1})^{\T^{n-1}})$.
\end{corollary}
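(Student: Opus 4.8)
The plan is to specialize Theorem~\ref{thm:quasi-hyperbolic-spectral} to the case of a symbol depending only on $\widetilde{H}_0$, and then to extract the claimed $C^*$-algebra embedding by comparing the resulting multiplier function with the quasi-elliptic spectral function from Theorem~\ref{thm:quasi-elliptic-spectral}. First I would observe that, by Lemma~\ref{lem:H-vs-widetildeH-quasi-hyperbolic}, a symbol of the form $a = f \circ \widetilde{H}_0$ with $f : \R^{n-1}_+ \rightarrow \C$ essentially bounded is indeed a quasi-hyperbolic moment map symbol; it is simply the special case of $a = g \circ \widetilde{H}$ in which the function $g : \R^{n-1}_+ \times \R \rightarrow \C$ does not depend on its last variable, that is $g(h',u) = f(h')$. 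Hence Theorem~\ref{thm:quasi-hyperbolic-spectral} applies verbatim and yields $R_\lambda \circ T^{(\lambda)}_a \circ R_\lambda^* = M_{\gamma_{a,\lambda}}$, with $\gamma_{a,\lambda}$ given by the double integral in the statement of that theorem, now with the integrand factor $f(h',u)$ replaced by $f(h')$.

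The next step is the key computation: since $f(h')$ is independent of $u$, the integral over $u \in \R$ in the numerator factors out and cancels against the $u$-integral implicitly present in $V\big(2\xi, -\frac{|p'|+n+\lambda-1}{2}\big)$, which is exactly the total mass of the Romanovski-Routh weight appearing in the denominator. Concretely, using the definition of $V$ together with the computed value of $|F(p',\xi,h',u)|^2$, the $u$-integral in the numerator equals precisely $V\big(2\xi, -\frac{|p'|+n+\lambda-1}{2}\big)$. After this cancellation the factor $V$ disappears entirely and $\gamma_{a,\lambda}(p',\xi)$ reduces to the ratio of integrals over $\R^{n-1}_+$ alone, as displayed in the statement. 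In particular, nothing depends on $\xi$, proving the independence claim. I expect this cancellation to be the heart of the argument, though it is routine once the remarks on $F$ and $V$ from subsection~\ref{subsec:quasi-hyperbolic-Bergman} are invoked.

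For the final $C^*$-algebra statement I would compare the reduced formula for $\gamma_{a,\lambda}(p',\xi)$ with equation~\eqref{eq:quasi-elliptic-spectral} of Remark~\ref{rmk:quasi-elliptic-spectral}. Writing out the quasi-elliptic spectral function in dimension $n-1$ with weight parameter $\lambda$, one sees that it is exactly the ratio appearing here, because the Pochhammer exponent $|p'| + (n-1) + \lambda + 1 = |p'| + n + \lambda$ in the quasi-elliptic denominator matches the exponent $|p'| + n + \lambda$ occurring in our integrals over $\R^{n-1}_+$. Thus each symbol $a = f \circ \widetilde{H}_0$ produces a multiplier $M_{\gamma_{a,\lambda}}$ on $\ell^2(\Z^{n-1}_+, L^2(\R))$ whose value $\gamma_{a,\lambda}(p',\xi)$ coincides, independently of $\xi$, with the multiplier value of the quasi-elliptic Toeplitz operator $T^{(\lambda)}_{f \circ H}$ on $\B^{n-1}$ acting on $\ell^2(\Z^{n-1}_+)$. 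Since $\gamma_{a,\lambda}$ is independent of $\xi$, the associated multiplier operators on $\ell^2(\Z^{n-1}_+, L^2(\R)) \cong \ell^2(\Z^{n-1}_+) \otimes L^2(\R)$ are precisely those of the form $M \otimes I$ where $M$ runs over the quasi-elliptic multipliers on $\ell^2(\Z^{n-1}_+)$. The main obstacle is to argue that this correspondence extends to the generated $C^*$-algebras: because both sides are described via the spectral function $\gamma$, products and adjoints of Toeplitz operators correspond to products and conjugates of multiplier functions, so the map $M \mapsto M \otimes I$ is an injective, $*$-preserving, norm-preserving homomorphism from the $C^*$-algebra generated by quasi-elliptic multipliers onto a $C^*$-subalgebra of the quasi-hyperbolic multiplier algebra. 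Transporting back through the unitary equivalences $R_\lambda$ of Theorems~\ref{thm:Segal-Bargmann-QE} and~\ref{thm:Segal-Bargmann-QH} then gives the desired isomorphic copy of $\Ta^{(\lambda)}(L^\infty(\B^{n-1})^{\T^{n-1}})$ inside $\Ta^{(\lambda)}(L^\infty(D_n)^{\T^{n-1}\times\R_+})$, completing the proof.
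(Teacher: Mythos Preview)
Your proposal is correct and follows essentially the same approach as the paper: the corollary is obtained by specializing Theorem~\ref{thm:quasi-hyperbolic-spectral} to symbols independent of $u$, cancelling the resulting $u$-integral against the factor $V\big(2\xi,-\tfrac{|p'|+n+\lambda-1}{2}\big)$, and then comparing with Theorem~\ref{thm:quasi-elliptic-spectral} and Remark~\ref{rmk:quasi-elliptic-spectral} in dimension $n-1$ to obtain the $C^*$-algebra embedding. In fact your write-up supplies considerably more detail than the paper itself, which merely points to this comparison.
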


\subsection*{Acknowledgments}
This research was partially supported by Conahcyt Grants 280732 and 61517.


\begin{thebibliography}{11}

\bibitem{GR} Gradshteyn, I. S. and Ryzhik, I. M.: Table of integrals, series, and products. Translated from the Russian. Translation edited and with a preface by Daniel Zwillinger and Victor Moll. Eighth edition. 

\bibitem{GKVHyperbolic} Grudsky, S.; Karapetyants, A. and Vasilevski, N.:
\emph{Dynamics of properties of Toeplitz operators on the upper half-plane: hyperbolic case}. Bol. Soc. Mat. Mexicana (3) 10 (2004), no.1, 119--138.

\bibitem{GQVJFA} Grudsky, S.; Quiroga-Barranco, R. and Vasilevski, N.:
\emph{Commutative $C^*$-algebras of Toeplitz operators and quantization on the unit disk}. J. Funct. Anal. 234 (2006), no.1, 1--44.

\bibitem{RomanMartinez} Martínez-Finkelshtein, A.; Silva Ribeiro, L. L.; Sri Ranga, A. and Tyaglov, M.: \emph{Complementary Romanovski-Routh polynomials: from orthogonal polynomials on the unit circle to Coulomb wave functions}.
Proc. Amer. Math. Soc. 147 (2019), no.6, 2625--2640.

\bibitem{SpecialFunctions} Nikiforov, Arnold F. and Uvarov, Vasilii B.: Special functions of mathematical physics. A unified introduction with applications. Translated from the Russian and with a preface by Ralph P. Boas. With a foreword by A. A. Samarskiĭ Birkh\"auser Verlag, Basel, 1988.

\bibitem{QSJFA} Quiroga-Barranco, Raul and Sanchez-Nungaray, Armando: \emph{Moment maps of Abelian groups and commuting Toeplitz operators acting on the unit ball}. J. Funct. Anal. 281 (2021), no.3, article 109039.

\bibitem{QVUnitBall1} Quiroga-Barranco, Raul and Vasilevski, Nikolai: \emph{Commutative $C^*$-algebras of Toeplitz operators on the unit ball. I. Bargmann-type transforms and spectral representations of Toeplitz operators.} Integral Equations Operator Theory 59 (2007), no. 3, 379--419.

\bibitem{RomanPeetre} Peetre, Jaak: \emph{Correspondence principle for the quantized annulus, Romanovski polynomials, and Morse potential}. J. Funct. Anal. 117 (1993), no.2, 377--400.

\bibitem{SMRPolyBergmanRomanovski} Sanchez-Nungaray, Armando; Morales-Ramos, Miguel Antonio and Ramirez-Mora, Maria del Rosario: \emph{Weighted Bergman spaces associated with the hyperbolic group}. Complex Anal. Oper. Theory 17 (2023), no.1, Paper No. 5, 22 pp.

\bibitem{RomanWeber} Weber, Hans J.: \emph{Connections between Romanovski and other polynomials}. Cent. Eur. J. Math. 5 (2007), no.3, 581--595.

\end{thebibliography}
\end{document}